\newtheorem{thm}{Theorem}[section]
\newtheorem{lem}[thm]{Lemma}
\newtheorem{rem}[thm]{Remark}}
\newcommand{\norm}[1]{\lVert#1\rVert}
\newcommand{\ra}{\rightarrow}
\def\R{\mathbb R}
\def\C{\mathscr C}
\def\F{\mathscr F}
\def\d{\text{\rm{d}}}
\def\E{\mathbb E}
\def\p{\mathbb P}
\def\e{\text{\rm{e}}}
\def\la{\langle}
\def\raa{\rangle}
\def\La{\Lambda}
\def\veps{\varepsilon}
\def\de{\delta}
\def\S{\mathcal S}
\def\C{\mathscr C}
\def\wt{\widetilde}
\def\spec{\mathrm{Spec}}
\def\Re{\mathrm{Re}}
\newtheorem{Theorem}{Theorem}[section]
\newtheorem{Example}[Theorem]{Example}
\newtheorem{Assumption}[Theorem]{Assumption}
\newcommand{\beq}[1]{\begin{equation} \label{#1}}
\newcommand{\eeq}{\end{equation}}
\newcommand{\bed}{\begin{displaymath}}
\newcommand{\eed}{\end{displaymath}}
\newcommand{\bea}{\bed\begin{array}{rl}}
\newcommand{\eea}{\end{array}\eed}
\newcommand{\disp}{\displaystyle}
\newcommand{\ad}{&\!\!\!\disp}
\newcommand{\barray}{\begin{array}{ll}}
\newcommand{\earray}{\end{array}}
\newcommand{\wdt}{\widetilde}
\newcommand{\wdh}{\widehat}
\newcommand{\sg}{\sigma}
\newcommand{\one}{\mathbf{1}}
\newcommand{\B}{{\mathcal B}}
\newcommand{\m}{\mathfrak m}
\def\jinghai#1 {\fbox {\footnote {\ }}\ \footnotetext {From Jinghai: #1}}
\def\fubao#1 {\fbox {\footnote {\ }}\ \footnotetext {From Fubao: #1}}
\newenvironment{proof}{{\noindent\it Proof.}\ }{\hfill $\square$\par}
\numberwithin{equation}{section}
\begin{document}

\title{Stabilization of regime-switching processes by feedback control based on discrete time observations II: state-dependent case\footnote{Supported in
 part by the National Natural Science Foundation of China (Grant Nos. 11671034, 11771327, 11431014)}}

\author{Jinghai Shao\thanks{Center for Applied Mathematics, Tianjin University, Tianjin 300072, China. Email: shaojh@tju.edu.cn.} \and Fubao Xi\thanks{School of Mathematics and Statistics, Beijing Institute of Technology, Beijing 100081,
China. Email: xifb@bit.edu.cn.}}

\maketitle

\begin{abstract}
  This work investigates the almost sure stabilization of a class of regime-switching systems based on discrete-time observations of both continuous and discrete components. It develops Shao's work [SIAM J. Control Optim., 55(2017), pp. 724--740] in two aspects: first, to provide sufficient conditions for almost sure stability in lieu of moment stability; second, to investigate a class of state-dependent regime-switching processes instead of state-independent ones. To realize these developments, we establish an estimation of the exponential functional of Markov chains based on the spectral theory of linear operator. Moreover, through constructing order-preserving coupling processes based on Skorokhod's representation of jumping process, we realize the control from up and below of the evolution of state-dependent switching process by state-independent Markov chains. 
\end{abstract}

\noindent AMS subject Classification (2010):\ 60H10, 93D15, 60J10

\noindent \textbf{Keywords}: Stability, Regime-switching, State-dependent, Feedback control, Discrete-time observations

\section{Introduction}\label{Intro}

This work is concerned with the stability of the following regime-switching process:
\begin{equation}\label{1.1}
\d X(t)=\big[a(X(t),\La(t))-b(\La(\de(t)))X(\de(t))\big]\d t+\sigma(X(t),\La(t))\d W(t),
\end{equation}
where $\de(t)=[t/\tau]\tau$, $[t/\tau]$ denotes the integer part of the number $t/\tau$, $\tau$ is a positive constant, and $(W(t))$ is a $d$-dimensional Wiener process. Here $(\La(t))$ is a continuous time jumping process on $\S=\{1,2,\ldots,M\}$, $M<+\infty$, satisfying
\begin{equation}\label{1.2}
\p(\La(t+\Delta)=j|\La(t)=i,\,X(t)=x)=\left\{\begin{array}{ll} q_{ij}(x)\Delta+o(\Delta), & \text{if}\ i\neq j,\\
1+q_{ii}(x)\Delta+o(\Delta), &\text{if}\ i=j,
\end{array}\right.
\end{equation} provided $\Delta \downarrow 0$, where
$0\le q_{ij}(x)<+\infty$ for all $i, j \in \S$ with $i \neq j$. As
usual, we assume that for each $x\in\R^d$, the $Q$-matrix $Q_{x}=\big(q_{ij}(x)\bigr)$ is conservative; namely $q_{i}(x):=-q_{ii}(x)=\sum_{j\in \S\setminus\{i\}}q_{ij}(x)$ for all $i\in\S$.
Equation \eqref{1.1} is a type of stochastic functional differential equation. In current work we shall provide sufficient conditions to ensure the almost sure stability of the system \eqref{1.1} and \eqref{1.2}.

Regime-switching processes have drawn much attention due to the demand of modeling, analysis and computation of complex dynamical systems, and have been widely used in mathematical finance, engineer, biology etc (see, e.g. the monographs \cite{MY,YZ}). Compared with the classical stochastic processes without switching, regime-switching processes can reflect the random change of the environment in which the concerning system lived. Then there are many new difficulties and phenomena appeared in the study of regime-switching processes. See, for instance, \cite{BBG1999,KZY2007,MY,SX,YX2010,YZ} and references therein on the stability of such system; \cite{BL,CH,PP,SX,Sh15a,Sh15b} on the recurrence of such system; \cite{SY,Bar,HS18} on the heavy or light tail behavior of the invariant probability measure of such system. Besides, there are some literature on the regime-switching processes driven by L\'evy processes \cite{CWZZ,Xi,XZ17,YX2010}. Recently, there are also some studies on regime-switching stochastic functional differential equations, e.g. \cite{BSY,M13,Mao14,Shao17a,Mao15,NY}.

Our motivation to study the equation \eqref{1.1} is to stabilize an unstable system \eqref{1.1} with $b=0$ based on discrete time observations of $(X(t))$ and $(\La(t))$.  Such stabilization problem for regime-switching processes was first raised by Mao \cite{M13} for the sake of saving cost and being more realistic. There, Mao investigated the mean-square stability of the following controlled system:
\[\d X(t)=(a(X(t),\La(t))-b(X(\de(t),\La(t))))\d t+\sigma(X(t),\La(t))\d W(t),\]
where $(\La(t))$ is a continuous time Markov chain independent of the Wiener process $(W(t))$. Subsequently, many works were devoted to developing this stabilization problem. See, for example, \cite{Mao14,Mao15}. Especially, in \cite{Mao15},  some sufficient conditions were provided to ensure this system to be almost surely stable. Inspired by these works, \cite{Shao17a} investigated the stability of such kind of system not only based on discrete time observations of $(X(t))$ but also according to discrete time observations  of $(\La(t))$. This needs to overcome the essential difference between the path property of $(X(t))$ and $(\La(t))$. For the continuous process $(X(t))$, since $X(t-\tau)$ tends to $X(t)$ as $\tau\ra0$, the difference between $X(t)$ and $X(t-\tau)$ can be controlled when $\tau$ is sufficiently small.  However, for the jumping process $(\La(t))$, even $\La(t)$ and $\La(t-):=\lim_{s\uparrow t} \La(s)$ may be quite different. Therefore, in \cite{Shao17a} Shao takes advantage of the independence of $(W(t))$ and $(\La(t))$ to control the evolution of $(X(t))$ through the long time behavior of  $(\La(t))$ and $(\La(n\tau))_{n\geq 0}$. Precisely, it  was shown that
\begin{equation}\label{co-1}
\E |X(t)|^2\leq |X(0)|^2\E\Big[\e^{\int_0^t f(\La(r))+K_\tau g(\La(\de(r)))\d r}\Big],
\end{equation}
where $f,\,g:\S\ra\R$, $K_\tau$ is a constant related to $\tau$. As an embedded Markov chain, $(\La(n\tau))_{n\geq 0}$ has the same stationary distribution as that of $(\La(t))$. However, as mentioned in \cite[Remark 3.3]{Shao17a}, the following kind of quantity cannot be handled at that time in order to show the almost sure stability
\[\E\int_0^\infty\e^{\int_0^t g(\La(\de(s)))\d s}\d t<\infty,\]
which could be dealt with in current work under the help of spectral theory of linear operator (see Lemma \ref{t-2.1} below).

In this work, we will overcome two difficulties to establish the  almost sure stability of $(X(t),\La(t))$ given by \eqref{1.1} and \eqref{1.2}. First, via the spectral theory of linear operators, we provide estimates from upper and below the exponential functional of  Markov chain $(Y_n)_{n\geq 0}$. Second, through using Skorokhod's representation of jumping processes or constructing order-preserving
coupling, we can control the evolution of state-dependent jumping process $(\La(t))$ by some auxiliary state-independent Markov chains. Moreover, to ensure the existence of the system $(X(t),\La(t))$ satisfying \eqref{1.1} and \eqref{1.2}, and the existence of order-preserving couplings, a general result on the existence of regime-switching stochastic functional differential equation is established. In particular, we only assume that  $x\mapsto q_{ij}(x)$ is continuous and $q_i(x)$ is of polynomial order of growth for $i,\,j\in\S$ (see Theorem \ref{exiuni} below for details).

The remainder of this paper is arranged as follows: Section 2 presents some necessary preparation results concerning the existence of solution for state-dependent regime-switching stochastic functional differential equations, estimate of exponential functional of Markov chains, and constructing auxiliary Markov chains to control the evolution of state-dependent jumping process $(\La(t))$. Section \ref{Almost} studies the almost sure stability for a class of regime-switching systems based on discrete-time observations. Using the technique used in \cite{Mao15}, we can prove  our main result, Theorem \ref{t3.4}, of this work. Finally, the proof of the existence and uniqueness of solution for regime-switching stochastic functional differential equations is appended in  Appendix A.

\section{Preliminary results}\label{Pre}

Let us begin this section with the existence and uniqueness of above system \eqref{1.1} and \eqref{1.2} which can be viewed as a regime-switching stochastic functional differential equations (SFDEs). Here we collect the conditions used in this work on the coefficients of \eqref{1.1} and transition rate matrix $(q_{ij}(x))$.
We can consider a little more general SFDE:
\begin{equation}\label{1.1b}
\d X(t)=\big[a(X(t),\La(t))-b(X(\de(t)),\La(\de(t)))\big]\d t+\sigma(X(t),\La(t))\d W(t).
\end{equation}
Suppose the coefficients $a(\cdot,\cdot):\R^d\times\S\ra \R^d$, $b(\cdot,\cdot):\R^d\times\S\ra [0,\infty)$ and $\sigma(\cdot,\cdot):\R^d\times \S\ra \R^{d\times d}$ satisfy the following conditions.
\begin{itemize}
  \item[(\textbf{H1})]\ There exist nonnegative functions $C(\cdot)$ and $c(\cdot)$ on $\S$ and a positive constant $\hat b$ such that
      \[c(i)|x|^2\leq 2\la a(x,i),x\raa+\|\sigma(x,i)\|_{\textrm{HS}}^2\leq C(i)|x|^2,\ \ (x,i)\in \R^d\times \S,\]
      \[|b(x,i)|\le \hat b(1+|x|), \ x\in \R^d,\ \ (x,i)\in \R^d\times \S,\]
      where $\|\sigma(x,i)\|_{\textrm{HS}}^2=\mathrm{trace}(\sigma\sigma^\ast)(x,i)$ with $\sigma^\ast$ denoting the transpose of the matrix $\sigma$.
  \item[(\textbf{H2})]\ There exists a positive constant $\bar K$ such that
   \[ |a(x,i)-a(y,i)|+|b(x,i)-b(y,i)|+\|\sigma(x,i)-\sigma(y,i)\|_{\textrm{HS}}\leq \bar K|x-y|, \ x,y\in \R^d, \ i\in \S.\]
   \end{itemize}
Moreover, let the $Q$-matrix $Q_{x}=\big(q_{ij}(x)\bigr)$ satisfy the following conditions:
\begin{itemize}
  \item[$\mathrm{({\bf Q1})}$] $x\mapsto q_{ij}(x)$ is continuous for every $i,\,j\in\S$.
  \item[$\mathrm{({\bf Q2})}$] $H:=\sup_{x\in\R^d}\max_{i\in \S} q_i(x)<\infty$.
\end{itemize}
The condition (\textbf{Q2}) is used in the control of the evolution of $(\La(t))$ through Markov chains. If only for the aim of existence and uniqueness of the dynamical system $(X(t),\La(t))$, we can use a weaker condition as follows:
\begin{itemize}
  \item[$\mathrm{({\bf Q2'})}$] $q_i(x)=\sum_{j\in {\S\setminus{\{i\}}}}q_{ij}(x)\le K_{0}(1+|x|^{\kappa_{0}})$ for every $(x,i)\in \R^d\times \S$, where $K_0$ and $\kappa_0$ are positive constants.
\end{itemize}

\begin{thm}\label{exiuni}
Assume conditions (\textbf{H1}), (\textbf{H2}), (\textbf{Q1}) and (\textbf{Q2'}) hold. Then there exists a unique nonexplosive solution $(X,\La)$ to   \eqref{1.1b} and \eqref{1.2}.
\end{thm}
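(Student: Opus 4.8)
The plan is to construct the solution interval by interval, reducing the analysis on each interval to that of a state-dependent regime-switching SDE \emph{without} delay, to which a localization argument applies. The key observation is that on each interval $[n\tau,(n+1)\tau)$ the map $\de(t)$ is constantly equal to $n\tau$, so the feedback term $b(X(\de(t)),\La(\de(t)))=b(X(n\tau),\La(n\tau))$ is an $\F_{n\tau}$-measurable random vector that is frozen over the whole interval. Hence, given the data $(X(n\tau),\La(n\tau))$, equation \eqref{1.1b} restricted to $[n\tau,(n+1)\tau)$ reads
\[
\d X(t)=\big[a(X(t),\La(t))-\xi_n\big]\,\d t+\sigma(X(t),\La(t))\,\d W(t),\qquad \xi_n:=b(X(n\tau),\La(n\tau)),
\]
a genuine regime-switching SDE with a constant random shift in the drift. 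Once existence, uniqueness and non-explosion are established on $[0,\tau]$ with (deterministic) initial data, I would propagate the construction by induction, using $(X(n\tau),\La(n\tau))$ as the new initial condition; the successive non-explosion estimates chain together to give a global solution.

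To build the coupled process on a single interval I would use Skorokhod's representation of the jumping component. Partition $[0,\infty)$ into consecutive intervals $\{\Gamma_{ij}(x)\}_{j\neq i}$ of lengths $q_{ij}(x)$, set $h(x,i,z)=j-i$ for $z\in\Gamma_{ij}(x)$ and $h(x,i,z)=0$ otherwise, and let $N(\d t,\d z)$ be a Poisson random measure on $[0,\infty)^2$ with intensity $\d t\,\d z$, independent of $W$. Then
\[
\d\La(t)=\int_{[0,\infty)}h\big(X(t),\La(t-),z\big)\,N(\d t,\d z)
\]
reproduces exactly the transition rates \eqref{1.2}, and continuity from (\textbf{Q1}) guarantees that the intervals $\Gamma_{ij}(x)$, and hence $h$, depend jointly measurably and stably on $x$, which is what makes the coupled construction well posed. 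Because (\textbf{Q2'}) bounds $q_i(x)$ only polynomially, the jump rate is unbounded; I therefore introduce the stopping times $\zeta_k=\inf\{t\ge0:\abs{X(t)}\ge k\}$ and work on $[0,\zeta_k)$, where $q_i(X(t))\le K_0(1+k^{\kappa_0})$ is bounded, the number of jumps of $\La$ in finite time is a.s.\ finite, and between jumps $X$ solves a classical SDE with globally Lipschitz coefficients by (\textbf{H2}). This yields a unique strong solution up to $\zeta_k$ for every $k$, hence up to $\zeta:=\lim_{k\to\infty}\zeta_k$.

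It remains to prove non-explosion, $\zeta=\infty$ a.s., which is the crux. Applying It\^o's formula to $V(x)=1+\abs{x}^2$ on $[0,t\wedge\zeta_k]$ and noting that, because $V$ carries no dependence on the regime, the switching part of the extended generator vanishes, only the diffusion part remains:
\[
\E V(X(t\wedge\zeta_k))=V(X(0))+\E\int_0^{t\wedge\zeta_k}\!\Big(2\la a(X(s),\La(s))-\xi_n,X(s)\raa+\hs{\sigma(X(s),\La(s))}^2\Big)\,\d s.
\]
Bounding the integrand by the upper estimate in (\textbf{H1}), using $\max_{i\in\S}C(i)<\infty$ (as $\S$ is finite) together with the linear growth of $b$ from (\textbf{H1}), one gets $2\la a(x,i)-\xi_n,x\raa+\hs{\sigma(x,i)}^2\le K(1+\abs{x}^2)=KV(x)$ with $K$ depending on the frozen shift $\xi_n$; Gronwall's inequality then gives $\E V(X(t\wedge\zeta_k))\le V(X(0))\e^{Kt}$, so $\p(\zeta_k\le T)\le V(X(0))\e^{KT}/(1+k^2)\to0$ and $\zeta=\infty$ a.s. Uniqueness is pathwise: driving two solutions by the same $(W,N)$, the global Lipschitz property (\textbf{H2}) controls the continuous parts through a Gronwall estimate on $\E\sup_{s\le t\wedge\zeta_k}\abs{X^{(1)}(s)-X^{(2)}(s)}^2$, while the two copies of $\La$ jump simultaneously once their $X$-components coincide, and induction over the a.s.\ finite jump times on $[0,\zeta_k)$ forces the solutions to agree. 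The main obstacle is precisely the interplay between the unbounded state-dependent jump rates under (\textbf{Q2'}) and the diffusion: one must simultaneously rule out blow-up of the jump intensity and of $\abs{X}$, which is handled by the joint localization through $\zeta_k$ and by the regime-independence of $V$, which annihilates the switching contribution to the generator.
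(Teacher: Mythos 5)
Your proposal is correct and rests on the same essential toolbox as the paper's proof — Skorokhod's representation of $\La$ via a Poisson random measure, localization by exit times of $X$, It\^o's formula on $|x|^2$ combined with (\textbf{H1}) and the linear growth of $b$, Gronwall's inequality iterated over observation intervals, and the key observation that while $X$ stays bounded the switching rates are bounded by (\textbf{Q2'}) so $\La$ has only finitely many jumps — but you order the two inductions differently. You freeze the feedback term $\xi_n=b(X(n\tau),\La(n\tau))$ over each observation interval $[n\tau,(n+1)\tau)$ first, reducing to a delay-free state-dependent switching SDE on each interval, and then interlace over the jump times of $\La$ inside it; the paper instead interlaces globally over the jump times $\tau_1<\tau_2<\cdots$ of $\La$, which forces it to solve, between consecutive jumps, a genuine SFDE containing the delayed term (handled by Picard iteration following Mao's book) and to bookkeep whether $\de(t)$ falls before or after $\tau_1$, as in \eqref{hatXbeforetau1}--\eqref{wdhLade(t)}. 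Your ordering buys a cleaner reduction, since classical SDE theory between jumps suffices; the paper's ordering makes the switching mechanism (the exponential-clock definition of $\tau_n$ and the post-jump distribution \eqref{la-1}) fully explicit, which it reuses for the coupling constructions of Section 2. Two points to tighten: for $n\ge 1$ the shift $\xi_n$ is random, so the constant $K$ in your Gronwall bound cannot be pulled out of the expectation as written — keep $|\xi_n|^2\le 2\hat b^2\big(1+|X(n\tau)|^2\big)$ as an additive term, take expectations, and use the inductively established finiteness of $\E|X(n\tau)|^2$, exactly as the paper does in \eqref{0tau}--\eqref{2.11A3}; and since your single family $\zeta_k$ merges the two localizations the paper keeps separate ($\wdt\tau_m$ for the exit of $X$, $\tau_\infty$ for jump accumulation), you should state explicitly that on $[0,\zeta_k)$ the jump counting process is dominated by a Poisson process of rate $K_0(1+k^{\kappa_0})$ times a constant, so the interlacing exhausts $[0,t\wedge\zeta_k]$ almost surely — this is the content of the paper's third step in its nonexplosion argument. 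Neither point is a genuine gap; both are routine to repair within your scheme.
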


In order to preserve the flow of presentation, we relegate the proof of Theorem \ref{exiuni} to Appendix A. We provide a very explicit construction of the solution $(X,\La)$ to regime-switching SFDE \eqref{1.1b} and \eqref{1.2} and prove the nonexplosiveness of the solution. Compared with the corresponding results in \cite{Sh15,Shao17a} where $x\mapsto q_{ij}(x)$ is assumed to be Lipschitzian, here we only suppose $x\mapsto q_{ij}(x)$ to be continuous. This greatly simplifies the conditions to be verified so that the coupling process constructed below exists. On the other hand, contrary to the usual boundedness assumption (\textbf{Q2}) imposed in the previous works such as \cite{Shao17a, XY2011, YZ} etc., here the functions $q_{ij}(x)$ in the $Q$-matrix $Q_{x}=\big(q_{ij}(x)\bigr)$ may be unbounded. Hence the construction of the solution in Theorem \ref{exiuni} is of interest by itself.

In a similar way, one can establish the existence and uniqueness of solution for another widely studied  SFDE with regime-switching. To do so, we introduce some notations. Let $\C$ denote the continuous path space $C([-r,0];\R^d)$ endowed with the uniform topology, i.e., $\|\xi\|_{\infty}=\sup_{-r\leq s\leq 0}|\xi(s)|$ for $\xi\in\C$, where $r\ge 0$ is a constant. Consider the following SFDE:
\begin{equation}\label{1.1delay}
\d X(t)=b(X_t,\La(t),\La(t-r))\d t+\sigma(X_t,\La(t),\La(t-r))\d W(t)
\end{equation} with $X_0=\xi\in \C$, $\La(0)=i\in\S$, and $(\La(t))$ still satisfies \eqref{1.2} as above. Here, $b:\C\times \S\times\S\ra \R^d$, $\sigma:\C\times\S\times\S\ra \R^{d\times d}$, and $X_t\in \C$ is defined by $X_t(\theta)=X(t+\theta)$ for $\theta\in [-r,0]$. Here we regard that $\La(t-r)=i$ for $t-r<0$ when $\La(0)=i$. The following conditions guarantee the existence and uniqueness of the process $(X,\La)$ satisfying \eqref{1.1delay} and \eqref{1.2}.
\begin{itemize}
  \item[$\mathrm{({\bf A1})}$] $b(\cdot,i,j)$ and $\sigma(\cdot,i,j)$ are bounded on bounded subset of $\C$ for every $i,\,j\in\S$. Moreover, there exists a positive constant $K_1$ such that
      \[2\la b(\xi,i,j)-b(\eta,i,j),\xi(0)-\eta(0)\raa +\|\sigma(\xi,i,j)-\sigma(\eta,i,j)\|_{\textrm{HS}}^2\leq K_1\|\xi-\eta\|_{\infty}^2\]
      for all $\xi,\,\eta\in\C$, $i,\,j\in\S$, where $\la\cdot,\cdot\raa$ denotes the Euclidean inner product in $\R^d$, $\|\sigma\|_{\textrm{HS}}^2=\mathrm{trace}(\sigma\sigma^\ast)$ for $\sigma\in \R^{d\times d}$, $\sigma^\ast$ denotes the transpose of $\sigma$.
  \item[$\mathrm{({\bf A2})}$] There exists a positive constant $K_2$ such that $2\la b(\xi,i,j),\xi(0)\raa+\|\sigma(\xi,i,j)\|_{\textrm{HS}}^2\leq K_2(1+\|\xi\|_{\infty}^2)$ for all $\xi\in \C$ and $i$, $j\in \S$.
\end{itemize}

\begin{thm}\label{t2.2delay}
Assume conditions (\textbf{A1}), (\textbf{A2}), (\textbf{Q1}) and (\textbf{Q2'}) hold. Then there exists a unique nonexplosive solution $(X,\La)$ to  regime-switching SFDE \eqref{1.1delay} and \eqref{1.2}.
\end{thm}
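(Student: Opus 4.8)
The plan is to follow the same interlacing construction used for Theorem \ref{exiuni}, adapting it to the delay structure and to the extra dependence of the coefficients on the delayed discrete component $\La(t-r)$. The key observation for the existence and uniqueness question is that condition (\textbf{Q2'}) makes the total jump rate $q_i(x)$ locally bounded in $x$, so the jump times of $(\La(t))$ cannot accumulate before the continuous component $(X(t))$ explodes; one is then left only to rule out explosion of $(X(t))$ itself. Throughout, the driving Poisson random measure used to generate the switching is taken independent of the Wiener process $(W(t))$.

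First I would solve the continuous equation between jumps. On any time interval on which $\La(t)\equiv i$ and $\La(t-r)\equiv j$ are constant, equation \eqref{1.1delay} reduces to a classical SFDE
\[\d X(t)=b(X_t,i,j)\,\d t+\sigma(X_t,i,j)\,\d W(t),\]
whose coefficients are bounded on bounded subsets of $\C$ and satisfy the one-sided monotonicity inequality in (\textbf{A1}) together with the coercivity inequality in (\textbf{A2}); the monotone/coercive theory of SFDEs then yields a unique strong solution on that interval, depending continuously on the initial segment $X_t\in\C$. Next I would generate the switching by a Skorokhod-type representation: for each $x$, partition $[0,\infty)$ into consecutive intervals $\Gamma_{ij}(x)$, $j\neq i$, with the length of $\Gamma_{ij}(x)$ equal to $q_{ij}(x)$, and drive the jumps by a Poisson random measure $N(\d t,\d z)$ on $[0,\infty)\times[0,\infty)$ with intensity $\d t\,\d z$. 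Continuity of $x\mapsto q_{ij}(x)$ from (\textbf{Q1}) guarantees that the jump map $(i,x,z)\mapsto\sum_{j\neq i}(j-i)\,\one_{\Gamma_{ij}(x)}(z)$ is jointly measurable, which is exactly what is needed for the construction to be well posed.

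Interlacing the SFDE solution of the first step with the jumps prescribed by $N$ produces a c\`adl\`ag process $(X(t),\La(t))$ up to its explosion time. Because $q_i(X(t))\le K_0(1+|X(t)|^{\kappa_0})$, on the event $\{\sup_{s\le t}|X(s)|\le n\}$ the number of jumps before time $t$ is dominated by a Poisson process of bounded rate, so jumps cannot accumulate on any interval on which $X$ stays bounded. To finish, I would establish nonexplosion of $X$: introducing the stopping times $\tau_n=\inf\{t\ge 0:|X(t)|\ge n\}$ and applying It\^o's formula to $|X(t\wedge\tau_n)|^2$, condition (\textbf{A2}) gives the integrand bound $2\la b(X_t,\La(t),\La(t-r)),X(t)\raa+\hs{\sigma(X_t,\La(t),\La(t-r))}^2\le K_2(1+\norm{X_t}_\infty^2)$, after which a functional Gronwall argument on $\E\big[\sup_{s\le t}|X(s\wedge\tau_n)|^2\big]$ produces a bound uniform in $n$. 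Hence $\tau_n\to\infty$ almost surely and the solution is global, while uniqueness follows from pathwise uniqueness on each constant discrete configuration combined with the fact that the jumps are a measurable functional of $(X,W,N)$.

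The main obstacle I anticipate is twofold. First, the joint measurability and consistency of the Skorokhod jump representation must be secured under only continuity, rather than Lipschitz continuity, of the $q_{ij}$, which is precisely the weakening over \cite{Sh15,Shao17a} that the theorem advertises. Second, because $b$ and $\sigma$ depend on the whole path segment $X_t$ rather than on the single point $X(t)$, the Lyapunov estimate must be carried out functionally: the Gronwall step has to be phrased in terms of $\norm{X_t}_\infty$ and therefore requires controlling $\sup_{s\le t}|X(s)|$ rather than the pointwise value $|X(t)|$, so the delay-free moment bound of Theorem \ref{exiuni} has to be upgraded to a supremum bound before the localization by $\tau_n$ can be removed.
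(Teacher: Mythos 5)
Your overall architecture coincides with what the paper intends: the paper omits the proof of Theorem \ref{t2.2delay}, stating only that it follows by the same idea as Theorem \ref{exiuni}, i.e., the interlacing construction of Appendix A (solve the equation with frozen discrete data between switches, generate the switching times by exponential clocks/Skorokhod's representation, use (\textbf{Q2'}) to show jumps cannot accumulate before the exit times of $X$ from balls, and rule out explosion of $X$ by a moment estimate). Your handling of the jump mechanism --- joint measurability of the Skorokhod map under mere continuity (\textbf{Q1}), domination of the jump counts by a Poisson process on the event that $X$ stays bounded --- matches the appendix, and your observation that between switches one needs the monotone/coercive SFDE theory of \cite{RS} rather than Picard iteration is exactly right, since (\textbf{A1})--(\textbf{A2}) are of von Renesse--Scheutzow type.

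There is, however, a genuine gap in your nonexplosion step (and the same issue infects your uniqueness step). You claim that It\^o's formula, (\textbf{A2}), and ``a functional Gronwall argument on $\E\big[\sup_{s\le t}|X(s\wedge\tau_n)|^2\big]$'' produce a bound uniform in $n$. To take expectations of a running supremum you must control $\E\big[\sup_{s\le t}|M(s\wedge\tau_n)|\big]$ for the local martingale $M(t)=\int_0^t 2\la X(s),\sigma(X_s,\La(s),\La(s-r))\,\d W(s)\raa$, and Burkholder--Davis--Gundy then requires an autonomous growth bound on $\|\sigma\|_{\mathrm{HS}}$. But (\textbf{A2}) bounds only the combination $2\la b(\xi,i,j),\xi(0)\raa+\|\sigma(\xi,i,j)\|_{\mathrm{HS}}^2$; attempting to isolate $\int\|\sigma\|_{\mathrm{HS}}^2\,\d s$ from the It\^o identity is circular, since $b$ alone has no growth control (it is only bounded on bounded sets). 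This is precisely why the pointwise Gronwall of Appendix A does not transfer: there the coercivity in (\textbf{H1}) is pointwise in $|x|^2$, so $\E|X(t)|^2$ closes by itself, whereas here the coercivity is in $\|X_t\|_\infty^2$ and an $L^1$ supremum bound is simply not derivable from (\textbf{A1})--(\textbf{A2}). The standard repair --- and plausibly the reason \cite{RS} appears in the reference list --- is the stochastic Gronwall lemma of that paper: from $Z(t)\le \int_0^t c\,Z^*(s)\,\d s+M(t)+H(t)$ with $Z^*(s)=\sup_{u\le s}Z(u)$, one obtains $\E[(Z^*(t))^p]$ bounds for $p\in(0,1)$ with no control on $\la M\raa$ whatsoever. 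Applied with $Z(t)=|X(t\wedge\tau_n)|^2$ this gives $\p(\tau_n\le t)\le C(t)n^{-2p}\to0$, hence nonexplosion, and applied with $H\equiv0$ to $Z=|X-Y|^2$ it gives pathwise uniqueness. With that substitution your proof goes through; as written, the claimed uniform bound on $\E\big[\sup_{s\le t}|X(s\wedge\tau_n)|^2\big]$ is a step that would fail.
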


Theorem \ref{t2.2delay} can be proved by using the same idea of the argument of Theorem \ref{exiuni}, and hence the proof will be omitted.

In the remainder of this section, we shall present two kinds of preparation results: in the first place, we establish an estimation of exponential functional of a discrete-time Markov chain by the spectrum analysis method; in the second place, we construct two auxiliary Markov chains to control from upper and below the evolution of the state-dependent jumping process $(\La(t))$.

First, let us consider a time-homogeneous Markov chain $(Y_n)_{n\geq 0}$ on the state space $\S=\{1,\ldots,M\}$ with $1<M<\infty$. Denote
\[P_{ij}=\p( Y_1=j|Y_0=i),\qquad i,j\in \S.\]
Assume the transition matrix $P=(P_{ij})$ is a positive matrix, i.e., $P_{ij}>0$, $\forall\,i,j\in \S$.
Let $(\theta(i))_{i\in \S}$ be a series of real numbers. Put
\[\widetilde P_{ij}=e^{\theta(i)}P_{ij},\qquad i,j\in \S,\qquad \widetilde P=(\widetilde P_{ij})_{i,j\in \S}.\]
Denote $\spec(\wt P)$   the spectrum of the linear operator $\wt P$.
Let \[\lambda_1=\max\{\Re(\lambda); \lambda\in \spec(\wt P)\},\] where $\Re( \lambda)$ stands for the real part of the eigenvalue $\lambda$.

\begin{lem}\label{t-2.1}
  Let $\theta:\S\ra \R$. Then there exist two positive constants $K_3$, $K_4$ such that
  \[K_3\lambda_1^n\leq \E_{\mu}\Big[\exp\Big\{\sum_{k=0}^{n-1} \theta(Y_k)\Big\}\Big]\leq K_4\lambda_1^n \]  for every initial probability distribution $\mu$ of $(Y_n)_{n\geq 0}$ when  $n$ large enough. 
\end{lem}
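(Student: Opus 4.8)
The plan is to reduce the exponential functional to a matrix power and then invoke Perron--Frobenius theory for the strictly positive matrix $\wt P$. First I would record the identity
\[
\E_\mu\Big[\exp\Big\{\sum_{k=0}^{n-1}\theta(Y_k)\Big\}\Big]=\mu^\ast\,\wt P^{\,n-1} D\one,
\]
where $D=\diag(\e^{\theta(1)},\ldots,\e^{\theta(M)})$ and $\one$ is the all-ones column vector. This follows by one-step conditioning: setting $g_n(i)=\E_i[\exp\{\sum_{k=0}^{n-1}\theta(Y_k)\}]$, the Markov property gives $g_n(i)=\e^{\theta(i)}\sum_j P_{ij}g_{n-1}(j)=(\wt P g_{n-1})(i)$ with $g_1=D\one$, whence $g_n=\wt P^{\,n-1}D\one$ and $\E_\mu[\cdots]=\sum_i\mu(i)g_n(i)=\mu^\ast g_n$.

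Next, since $P_{ij}>0$ for all $i,j$ and $\e^{\theta(i)}>0$, the matrix $\wt P=DP$ has all entries strictly positive. The Perron--Frobenius theorem then yields a simple, real, strictly positive eigenvalue $\rho>0$ equal to the spectral radius of $\wt P$, with strictly positive right and left eigenvectors $u,v$ (normalized so that $v^\ast u=1$), while every other eigenvalue $\lam$ satisfies $|\lam|<\rho$. Consequently $\Re(\lam)\le|\lam|<\rho=\Re(\rho)$ for $\lam\neq\rho$, so that $\lambda_1=\rho$ is the Perron eigenvalue. Writing $\Pi=u v^\ast$ for the spectral projection onto the Perron eigenspace, the spectral-gap estimate gives $\wt P^{\,n-1}=\rho^{\,n-1}\Pi+R_{n-1}$ with $\|R_{n-1}\|\le C\,\varrho^{\,n-1}$ for some $\varrho<\rho$ (the second largest modulus among the eigenvalues) and a constant $C$.

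Substituting this decomposition yields $\mu^\ast\wt P^{\,n-1}D\one=\rho^{\,n-1}(\mu^\ast u)(v^\ast D\one)+\mu^\ast R_{n-1}D\one$. The leading coefficient $(\mu^\ast u)(v^\ast D\one)$ is bounded away from $0$ and $\infty$ uniformly in the probability vector $\mu$: since $u$ is strictly positive and $\sum_i\mu(i)=1$, one has $0<\min_i u_i\le\mu^\ast u\le\max_i u_i$, while $v^\ast D\one>0$ is a fixed positive number. The remainder satisfies $|\mu^\ast R_{n-1}D\one|\le\|R_{n-1}\|\,\|D\one\|\le C'\varrho^{\,n-1}$, again uniformly in $\mu$ because $\|\mu\|_1=1$. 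Dividing by $\rho^{\,n-1}$ and using $\varrho/\rho<1$, the error term tends to $0$ uniformly in $\mu$; hence for all $n$ large (independently of $\mu$) the quantity lies between two fixed positive multiples of $\rho^{\,n-1}=\rho^{-1}\lambda_1^n$, furnishing the constants $K_3,K_4$.

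The main obstacle is the second step: pinning down that $\lambda_1$ coincides with the Perron eigenvalue and extracting the geometric decay of $R_{n-1}$. Everything hinges on the strict positivity of $\wt P$, which forces the top eigenvalue to be simple and strictly dominant in modulus; without this spectral gap the ratio $\E_\mu[\cdots]/\lambda_1^n$ need not be bounded both above and below. A secondary point requiring care is the uniformity of $K_3,K_4$ over all initial distributions $\mu$, which I handle by bounding $\mu^\ast u$ through the strictly positive entries of the Perron eigenvector $u$.
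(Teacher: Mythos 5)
Your proposal is correct and follows essentially the same route as the paper: reduce the expectation to $\mu\wt P^{\,n}\one$, apply Perron--Frobenius to the strictly positive matrix $\wt P$, and split the matrix power into the rank-one Perron part $\lambda_1^n E(\lambda_1)=\lambda_1^n uv^\ast$ plus a remainder that is $o(\lambda_1^n)$ uniformly in $\mu$; the paper phrases the remainder via the Dunford--Schwartz spectral decomposition with projections $E(\lambda)$ and Jordan indices $v(\lambda)$, whereas you use an operator-norm bound $\|R_{n-1}\|\le C\varrho^{\,n-1}$ (note only that, when non-Perron eigenvalues carry nontrivial Jordan blocks, $\varrho$ must be taken strictly between the second-largest modulus and $\lambda_1$ so as to absorb the polynomial factors --- the paper handles the same issue through the binomial terms $\binom{n}{i}(|\lambda|/\lambda_1)^{n-i}\to 0$). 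Your explicit verification that the leading coefficient $(\mu^\ast u)(v^\ast D\one)$ is bounded between positive constants uniformly over probability vectors $\mu$, via $0<\min_i u_i\le\mu^\ast u\le\max_i u_i$, is precisely what justifies the uniformity of $K_3$, $K_4$ in $\mu$, a point the paper asserts but leaves implicit.
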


\begin{proof}
  According to the Perron-Frobenius theorem, due to the positivity of $\wt P$, which follows directly from the positivity of $P$, $\lambda_1$ is a simple eigenvalue of $\wt P$, and all the magnitudes of other eigenvalues of $\wt P$ are strictly smaller than $\lambda_1$. Invoking the spectral theory for linear operator in a finite dimensional Banach space (cf. Dunford and Schwartz \cite[Chapter VII, Theorem 8]{DS88}), there exists a family of linear operator $\{E(\lambda);\ \lambda\in \spec (\wt P)\}$ satisfying $E(\lambda)^2=E(\lambda)$, $E(\lambda)E(\tilde \lambda)=0$ if $\lambda\neq \tilde \lambda$, and $I=\sum_{\lambda\in \spec (\wt P)} E(\lambda)$ such that
\begin{equation}
  \label{spec-1} \wt P^n=\lambda_1^n E(\lambda_1)+\!\!\sum_{\lambda\in \spec (\wt P)\backslash \{\lambda_1\}}\!\!\!\sum_{i=0}^{v(\lambda)-1} \frac{(\wt P-\lambda I)^i}{i!}f^{(i)}(\lambda)E(\lambda),
\end{equation}
where $v(\lambda)$ denotes the index of the eigenvalue $\lambda$, which is a constant less than $2M$; the function $f$ is given by $f(x)=x^n$ and $f^{(i)}$ denotes the $i$-th order derivative of $f$. We can rewrite the terms in the summation as
\begin{align*}
  \frac{(\wt P-\lambda I)^i}{i!} f^{(i)}(\lambda)E(\lambda)&=\frac{n!}{i!(n-i)!}\lambda ^{n-i}(\wt P-\lambda I)^i E(\lambda)\\
  &=\lambda_1^n \frac{n!}{i!(n-i)!} \Big(\frac{\lambda}{\lambda_1}\Big)^{n-i}\Big(\frac{\wt P-\lambda I}{\lambda_1}\Big)^i E(\lambda).
\end{align*}
Note that for any $\lambda\in \spec (\wt P)$ with $\lambda\neq \lambda_1$, it holds $|\lambda|<\lambda_1$. Therefore, for any fixed $i<n$,
\[\lim_{n\ra \infty} \frac{n!}{i! (n-i)!}\Big(\frac{|\lambda|}{\lambda_1}\Big)^{n-i}=0.\]
Consequently, by \eqref{spec-1}, for any initial probability measure $\mu$ of $(Y_n)$ on $\S$,
\[\mu \wt P^n \mathbf 1=\lambda_1^n\mu E(\lambda_1)\mathbf 1+\sum_{\lambda\in \spec(\wt P)\backslash\{\lambda_1\}}\sum_{i=0}^{v(\lambda)-1}\mu\Big[\frac{(\wt P-\lambda I)^i}{i!}f^{(i)}(\lambda)E(\lambda)\mathbf 1\Big].\]
Hence, there exist two positive constants $K_3$, $K_4$ independent of the initial distribution $\mu$  such that
\[K_3\lambda_1^n\leq \mu \wt P^n\mathbf 1\leq K_4 \lambda_1^n,\qquad \text{for $n$ large enough}.\]
Invoking the definition of $\wt P$, this can be written in the expectation form as
\begin{equation}
  \label{2.2}
  K_3 \lambda_1^n\leq \E_{\mu}\Big[\exp\Big\{\sum_{k=0}^{n-1} \theta(Y_k)\Big\}\Big]\leq K_4 \lambda_1^n, \qquad \text{for $n$ large enough},
\end{equation}
which is just desired conclusion.
\end{proof}

Next, employing the idea of Shao \cite{Shao17}, we go to construct two auxiliary continuous-time Markov chains $(\bar\La(t))$ and $(\La^*(t))$
such that $\La^*(t)\leq\La(t)\leq \bar\La(t)$, $t\geq 0$, a.s., under some
appropriate conditions. Our stochastic comparisons are based on
Skorokhod's representation of $(\La(t))$ in
terms of the Poisson random measure by following the line of
\cite[Chapter II-2.1]{Sk89} or \cite{YZ}. To focus on the idea, we first consider the special situation that $\S$ consists of only two points, i.e., $\S=\{1,2\}$. To do so, we further
assume the following condition holds:
\begin{itemize}
   \item[({\bf Q3})] For each $x\in\R^d$, the $Q$-matrix $Q_{x}=\big(q_{ij}(x)\bigr)$ is irreducible.
\end{itemize}

According to the conservativeness of $Q_x$,
one has $q_1(x):=-q_{11}(x)=q_{12}(x)$, $x\in \R^d$.
For each $x\in \R^d$, let
\begin{align*}
  \Gamma_{12}(x):=[0,q_{12}(x))~~~~~\mbox{ and }~~~~~
  \Gamma_{21}(x):=[q_1(x),q_1(x)+q_{21}(x)).
\end{align*}
Obviously, the length of  $\Gamma_{12}(x)$ and  $\Gamma_{21}(x)$ is
$q_{12}(x)$ and $q_{21}(x)$, respectively.
 Define a function $\R^d\times
\S\times \R\ni(x,i,u)\mapsto h(x,i,u)\in\R$ by
\begin{equation*}
h(x,i,u)=(-1)^{1+i}\Big\{{\bf1}_{\{i=1\}}{\bf1}_{\Gamma_{ii+1}(x)}(u)+{\bf1}_{\{i=2\}}{\bf1}_{\Gamma_{ii-1}(x)}(u)\Big\}.
\end{equation*}
Then, as in the proof of Theorem \ref{exiuni}, $(\La_t)$ solves the following stochastic differential
equation (SDE for short)
\begin{equation}\label{2.3}
\d \La(t)=\int_{[0,L]} h(X(t),\La(t-),u)N(\d t,\d u),~~
~t>0,~~~\La(0)=i_0\in\S.
\end{equation}
Herein, $L:=2H$ with $H$ being introduced in condition ({\bf Q2}) and $N(\d
t,\d u)$ stands for a Poisson random measure with intensity $\d
t\times \mathbf{m}(\d u)$, in which $\mathbf{m}(\d u)$ signifies the
Lebesgue measure on $[0,L]$. Let $p(t)$ be the stationary Poisson
point process corresponding to the Poisson random measure $N(\d t,\d
u)$ so that $N([0,t)\times A)=\sum_{s\leq t}{\bf 1}_A(p(s))$ for
$A\in\mathscr{B}(\R)$.

Due to the finiteness of $\mathbf{m}(\d u)$ on $[0, L]$, there is
only finite number of jumps of the process $(p(t))$ in each finite
time interval. Let $ 0=\zeta_0<\zeta_1<\cdots<\zeta_n<\cdots$ be the
enumeration of all jumps of $(p(t))$. It holds that $\lim_{n\ra
\infty}\zeta_n=+\infty$ a.s. From \eqref{2.3}, it follows that
\begin{equation*}
\La(t)=i_0+\sum_{s\leq
t}h(X(s),\La(s-),p(s)){\bf1}_{[0,L]}(p(s)),~~
~t>0,~~~i_0\in\S,
\end{equation*}
which implies  that $(\La(t))$ may have a jump at only $\zeta_i$ (i.e.
$\La(\zeta_i)\neq \La(\zeta_i-)$) provided that
$p(\zeta_i)\in[0,L]$. So the collection of all jumping
times of $(\La(t))$ is a subset of $\{\zeta_1,\zeta_2,\cdots\}$.
Subsequently, this basic fact will be used frequently without mentioning it again.

Let
\begin{equation}\label{q-up}
\bar q_{12}:=\sup_{x\in\R^d} q_{12}(x), \ \bar
q_{21}:=\inf_{x\in\R^d}q_{21}(x), \ \bar q_1:=-\bar q_{11}:=\bar
q_{12}, \ \bar q_2:=-\bar q_{22}:=\bar q_{21},
\end{equation}
and
\begin{equation}\label{q-low}
 q_{12}^*:=\inf_{x\in\R^d} q_{12}(x), \
q_{21}^*:=\sup_{x\in\R^d}q_{21}(x), \ q_1^*:=-q_{11}^*:=  q_{12}^*, \
 q_2^*:=- q_{22}^*:=q_{21}^*.
\end{equation}
Let
\begin{equation*}
\bar \Gamma_{12}:=[0,\bar q_{12}), \ \bar \Gamma_{21}:=[\bar
q_{12}, \ \bar q_{12}+\bar q_{21}), \ \Gamma_{12}^*:=[0, q_{12}^*), \
\Gamma_{21}^*:=[q_{12}^*, q_{12}^*+q_{21}^*).
\end{equation*}
Using the same Poisson random measure $N(\d t,\d u)$  given in
\eqref{2.3}, we define two auxiliary Markov chains $(\bar \La(t))$ and $(\La^*(t))$ by the following SDEs:
\begin{equation}\label{2.4}
\d \bar\La(t)=\int_{[0, L]} \bar g(\bar\La({t-}),u)N(\d t,\d
u),~~~~~t>0,~~~~~\quad \bar\La(0)=\La(0),
\end{equation}
and
\begin{equation}\label{2.5}
\d  \La^*(t)=\int_{[0,  L]}   g^*(\La^*(t-),u)N(\d t, \d
u),~~~~~t>0,~~~~~\quad  \La^*(0)=\La(0),
\end{equation}
where, for $i\in\S$,
\begin{equation*}
\bar
g(i,u)=(-1)^{1+i}
\Big\{{\bf1}_{\{i=1\}}{\bf1}_{\bar\Gamma_{ii+1}}(u)
+{\bf1}_{\{i=2\}}{\bf1}_{\bar\Gamma_{ii-1}}(u)\Big\},~~~u\in[0,
L],
\end{equation*}
and
\begin{equation*}
g^*(i,u)=(-1)^{1+i}\Big\{{\bf1}_{\{i=1\}}{\bf1}_{\Gamma_{ii+1}^*}(u)
+{\bf1}_{\{i=2\}}{\bf1}_{\Gamma_{ii-1}^*}(u)\Big\},~~~u\in[0,
L].
\end{equation*}
Then, according to Skorokhod's representation, $(\bar\La_t)$ and
$(\La_t^*)$ are continuous-time Markov chains on $\S=\{1,2\}$ generated by the $Q$-matrices $\bar Q=(\bar
q_{ij})_{1\leq,i,j\leq2}$ and $Q^*=(q_{ij}^*)_{1\leq,i,j\leq2}$,
respectively.

\begin{lem}\label{t-2.2}
\begin{itemize}
  \item[$(i)$] If $\bar q_{21}>0$  and
\begin{equation}\label{2.6}
  \bar q_{12}+\bar q_{21}\leq q_{12}(x)+q_{21}(x),\qquad x\in \R^d,
\end{equation}
then
  $\La(t)\leq \bar\La(t)$ for all $t\geq 0$ a.s.
  \item[$(ii)$] If $ q_{12}^\ast>0$  and
\begin{equation}\label{2.7}
  q_{12}^*+ q_{21}^* \geq q_{12}(x)+q_{21}(x), \qquad x\in \R^d,
\end{equation}
then $\La^*(t)\leq \La(t)$ for all $t\geq 0$ a.s.
\end{itemize}
\end{lem}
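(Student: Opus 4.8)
The plan is to exploit that, by construction, $(\La(t))$ and the auxiliary chain are driven by the \emph{same} Poisson point process $p$ appearing in \eqref{2.3}, and that on $\S=\{1,2\}$ the partial order can be broken in essentially one way. Since $\mathbf m$ is finite on $[0,L]$, every bounded interval contains only finitely many jump times $0=\zeta_0<\zeta_1<\cdots$, and on each $[\zeta_n,\zeta_{n+1})$ both processes are constant and right-continuous. Hence it suffices to propagate the order across a single jump, and I would argue by induction over $n$ that $\La(\zeta_n)\le\bar\La(\zeta_n)$ (resp. $\La^*(\zeta_n)\le\La(\zeta_n)$), the base case being the common initial value $\La(0)=\bar\La(0)$ (resp. $\La^*(0)=\La(0)$). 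Granting the inductive invariant, the comparison then holds for all $t\ge0$ almost surely.

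For the inductive step in $(i)$, the only order-violating configuration on $\{1,2\}$ is $\La=2,\ \bar\La=1$, so it is enough to show this cannot be produced at $\zeta_{n+1}$ from an ordered pre-jump state. Writing $u=p(\zeta_{n+1})$ and $x=X(\zeta_{n+1})$, there are three ordered pre-jump configurations to check. If both sit at $1$, then $\La$ jumps up only for $u\in\Gamma_{12}(x)=[0,q_{12}(x))\subseteq[0,\bar q_{12})=\bar\Gamma_{12}$, which forces $\bar\La$ to jump up as well, so $(1,1)$ cannot pass to $(2,1)$. If both sit at $2$, the inclusion $\bar\Gamma_{21}=[\bar q_{12},\bar q_{12}+\bar q_{21})\subseteq[q_{12}(x),q_{12}(x)+q_{21}(x))=\Gamma_{21}(x)$, which follows from $q_{12}(x)\le\bar q_{12}$ together with hypothesis \eqref{2.6}, shows that whenever $\bar\La$ drops to $1$ so does $\La$, hence $(2,2)$ cannot pass to $(2,1)$. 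Finally, from the strictly ordered state $(\La,\bar\La)=(1,2)$ the up-interval $[0,q_{12}(x))$ of $\La$ and the down-interval $[\bar q_{12},\bar q_{12}+\bar q_{21})$ of $\bar\La$ are disjoint (because $q_{12}(x)\le\bar q_{12}$), so they cannot fire on the same $u$ and $(2,1)$ is again unreachable.

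Part $(ii)$ I would handle by the mirror-image argument, now forbidding $\La^*=2,\ \La=1$ and using the opposite extremal choices from \eqref{q-low}. The inclusion $\Gamma_{12}^*=[0,q_{12}^*)\subseteq[0,q_{12}(x))=\Gamma_{12}(x)$, valid since $q_{12}^*\le q_{12}(x)$, rules out the passage $(1,1)\to(2,1)$; while $\Gamma_{21}(x)=[q_{12}(x),q_{12}(x)+q_{21}(x))\subseteq[q_{12}^*,q_{12}^*+q_{21}^*)=\Gamma_{21}^*$, which follows from $q_{12}^*\le q_{12}(x)$ and hypothesis \eqref{2.7}, rules out $(2,2)\to(2,1)$; disjointness of the relevant intervals again excludes $(1,2)\to(2,1)$. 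The positivity assumptions $\bar q_{21}>0$ and $q_{12}^*>0$ guarantee that $(\bar\La(t))$ and $(\La^*(t))$ are genuine irreducible two-state chains (so that $\bar\Gamma_{21}$ and $\Gamma_{12}^*$ are nonempty intervals), which is what makes these comparisons useful downstream.

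The only real obstacle is the bookkeeping of the inductive step: one must be certain that the order on $\{1,2\}$ can fail \emph{only} through the single configuration named above, and then translate the statement ``the order survives the jump'' into set inclusions among the four intervals $\Gamma_{12}(x),\Gamma_{21}(x),\bar\Gamma_{12},\bar\Gamma_{21}$ (resp. the starred ones). Once these inclusions are checked, they reduce precisely to the hypotheses \eqref{2.6} and \eqref{2.7} together with the definitions \eqref{q-up} and \eqref{q-low}, and the almost sure comparison for all $t\ge0$ follows from right-continuity and the fact that both processes jump only at the $\zeta_n$.
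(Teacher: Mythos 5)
Your proposal is correct and takes essentially the same route as the paper's proof: both reduce the comparison to the jump times $\zeta_k$ of the shared Poisson point process and verify order preservation in the three ordered pre-jump configurations, your interval inclusions $\Gamma_{12}(x)\subseteq\bar\Gamma_{12}$ and $\bar\Gamma_{21}\subseteq\Gamma_{21}(x)$ (and their starred mirrors for part (ii)) being exactly the inequalities the paper uses in its Cases 1--3. Your packaging of the case analysis as unreachability of the single order-violating state $(\La,\bar\La)=(2,1)$ is a cosmetic reformulation, not a different argument.
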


\begin{proof}
  Here we include a proof for the completeness and the ease of the readers. We shall only prove assertion (i) as assertion (ii) can be proved in a similar way. Since there is no jump during the open interval $(\zeta_k,\zeta_{k+1})$, we only need to prove (i) at $\zeta_k$, $k\geq 1$. To this aim, we consider separately three different cases.

\noindent{\bf Case $1$: $\La(\zeta_k)=\bar\La(\zeta_k)=1$, $k\geq 1$.}
In this case, we deduce from \eqref{2.3} and \eqref{2.4} that
  \begin{equation*}
    \La(\zeta_{k+1})=1+\mathbf
    1_{\Gamma_{12}(X({\zeta_{k+1}}))}(p(\zeta_{k+1}))~~~~\mbox{ and } ~~~~
    \bar\La(\zeta_{k+1})=1+\mathbf 1_{\bar\Gamma_{12}}(p(\zeta_{k+1})).
  \end{equation*}
According to the notion of $\bar q_{12}$, one clearly has
$q_{12}(x)\leq \bar q_{12}$, $x\in\R^d$, which implies that
$\Gamma_{12}(X({\zeta_{k+1}}))\subset\bar \Gamma_{12}$, a.s. Whence,
$\La(\zeta_{k+1})\leq \bar\La(\zeta_{k+1})$, a.s.

 \noindent{\bf Case $2$: $\La(\zeta_k)=\bar\La(\zeta_k)=2$, $k\geq 1$.}   Concerning such case, we also obtain from \eqref{2.3} and \eqref{2.4} that
  \begin{equation}\label{w0}
    \La(\zeta_{k+1})=2-\mathbf 1_{\Gamma_{21}(X({\zeta_{k+1}}))}(p(\zeta_{k+1}))~~~~\mbox{ and } ~~~~
    \bar\La(\zeta_{k+1})=2-\mathbf 1_{\bar\Gamma_{21}}(p(\zeta_{k+1})).
  \end{equation}
If $p_1(\zeta_{k+1})\notin\bar\Gamma_{21}$, then, from \eqref{w0},
one has $\bar\La(\zeta_{k+1})=2$ so that $\La(\zeta_{k+1})\leq
\bar\La(\zeta_{k+1})$ due to the fact that $\La(\zeta_{k+1})\leq 2$.
Next, we proceed to deal with the case
$p(\zeta_{k+1})\in \bar \Gamma_{21}$, which of course leads to
$\bar\La(\zeta_{k+1})=1$ in view of \eqref{w0}, and $\bar q_{12}\leq
p(\zeta_{k+1})<\bar q_{12}+\bar q_{21}$. Employing the assumption
\eqref{2.6} and utilizing the fact that $q_{12}(X({\zeta_{k+1}}))\leq
\bar q_{12}$, we arrive at $q_{12}(X({\zeta_{k+1}}))\leq p(\zeta_{k+1})<
q_{12}(X({\zeta_{k+1}}))+ q_{21}(X({\zeta_{k+1}}))$, namely,
$p(\zeta_{k+1})\in \Gamma_{12}(X({\zeta_{k+1}}))$. As a consequence,
$\La(\zeta_{k+1})=\bar\La(\zeta_{k+1})=1$.

\noindent{\bf Case $3$: $\La(\zeta_k)=1$, $\bar\La(\zeta_k)=2$,
 $k\geq 1$.}
 For this setup, it follows from \eqref{2.3} and \eqref{2.4} that
  \begin{equation}\label{f1}
    \La(\zeta_{k+1})=1+\mathbf
    1_{\Gamma_{12}(X({\zeta_{k+1}}))}(p(\zeta_{k+1}))~~~~\mbox{ and
    }~~~~
    \bar \La(\zeta_{k+1})=2-
        \mathbf 1_{\bar\Gamma_{21}}(p(\zeta_{k+1})).
  \end{equation}
From \eqref{f1}, it is easy to see that $\La(\zeta_{k+1})\leq\bar
\La(\zeta_{k+1})$ if $p(\zeta_{k+1})\notin \bar \Gamma_{21}$. Now,
if $p(\zeta_{k+1})\in \bar\Gamma_{21}$, then we infer that $\bar
\La(\zeta_{k+1})=1$ and that $\bar q_{12}\leq p(\zeta_{k+1})< \bar
q_{12}+\bar q_{21}$. Hence, one has
$p(\zeta_{k+1})\ge q_{12}(X({\zeta_{k+1}}))$; in other words,
$p(\zeta_{k+1})\notin\Gamma_{12}(X({\zeta_{k+1}}))$. As a result, we
obtain from \eqref{f1} that $\La(\zeta_{k+1})=\bar
\La(\zeta_{k+1})=1$.

The desired result follows immediately by summing up the above three cases.
\end{proof}

{ We now proceed to the general situation that $\S$ could own more than two states}, i.e., $\S=\{1,\ldots,M\}$. To this end, we employ the coupling method and especially construct the so-called order-preserving couplings. To do so, we need some preparation.

Let $\Lambda^{(1)}$ and $\Lambda^{(2)}$ be two continuous-time
Markov chains defined by two generators
$Q^{(1)}=\bigl(q_{ij}^{(1)}\bigr)$ and
$Q^{(2)}=\bigl(q_{ij}^{(2)}\bigr)$ on the state space $\S$,
respectively. Note that $Q^{(1)}$ and $Q^{(2)}$ are called
$Q$-matrices in \cite{C2004, Z1996, Z1998}. A continuous-time Markov
chain $(\Lambda^{(1)},\Lambda^{(2)})$ on the product space $\S
\times \S$ is called a coupling of $\Lambda^{(1)}$ and
$\Lambda^{(2)}$, if the following marginality holds for any $t\ge
0$, $i_1, i_2 \in \S$ and $B_1, B_2 \subset \S$,
\beq{(OPC0)}
\barray\ad {\p}^{(i_{1},i_{2})}\bigl((\Lambda^{(1)}(t),
\Lambda^{(2)}(t))\in B_1\times
\S\bigr)=\p^{(i_{1})}(\Lambda^{(1)}(t)\in B_1),\\
\ad
{\p}^{(i_{1},i_{2})}\bigl((\Lambda^{(1)}(t),\Lambda^{(2)}(t))\in
\S \times B_2\bigr)=\p^{(i_{2})}(\Lambda^{(2)}(t)\in B_2),
\earray\eeq where the superscript in ${\p}^{(i_{1},i_{2})}$ and $\p^{(i_1)}$ is used to emphasize the initial value of the corresponding process.  From \cite{C2004} we know that
constructing a coupling Markov chain $(\Lambda^{(1)},\Lambda^{(2)})$
is equivalent to constructing a coupling generator $\wdt{Q}$ on the
finite state space $\S \times \S$, and such a generator $\wdt{Q}$
is called a coupling of $Q^{(1)}$ and $Q^{(2)}$. For given two
generators (or two $Q$-matrices), one can construct many their
coupling generators (or coupling $Q$-matrices); see \cite{C2004} for
more examples and explanation. In what follows we are especially
interested in the order-preserving couplings. On the product space
$\S \times \S$, an order-preserving coupling $\wdt{Q}$ of
$Q^{(1)}$ and $Q^{(2)}$ means that the corresponding Markov chain generated by $\wdt{Q}$ satisfies
\beq{(OPC1)}
{\p}^{(i_{1},i_{2})}\bigl(\Lambda^{(1)}(t)\le
\Lambda^{(2)}(t), \ \forall\, t\geq 0\bigr)=1,   \quad i_{1}\le i_{2} \in
\S.\eeq  See \cite[Chapter 5]{C2004} for the details about the
coupling $Q$-matrices and related materials. For more general case,
the construction of order-preserving couplings was studied in
\cite{Z1996, Z1998}. In particular, we have the following lemma from
the aforementioned three references.

\begin{lem}\label{OPC1}
If the generators $Q^{(1)}$ and $Q^{(2)}$ on $\S$
satisfy that \beq{(OPC2)}\barray \ad\sum_{l\ge m}q_{i_{1}l}^{(1)}
\le \sum_{l\ge m}q_{i_{2}l}^{(2)} \ \ \hbox{for all} \ \ i_{1}\le
i_{2} <m \ \
\hbox{and}\\
\ad\sum_{l\le m}q_{i_{1}l}^{(1)} \ge \sum_{l\le m}q_{i_{2}l}^{(2)} \
\ \hbox{for all} \ \ m< i_{1}\le i_{2},\earray\eeq there exists an
order-preserving coupling $Q$-matrix $\wdt{Q}$ on $\S \times \S$
and hence (\ref{(OPC1)}) holds. \end{lem}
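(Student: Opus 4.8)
The plan is to produce an explicit coupling generator $\wdt Q$ on $\S\times\S$ and to read off order preservation directly from its off-diagonal rates, rather than from the semigroup. The guiding observation is that for a continuous-time Markov chain on the finite space $\S\times\S$ the set $D:=\{(i_1,i_2):i_1\le i_2\}$ is invariant (the chain started in $D$ never leaves $D$, a.s., for all $t\ge 0$) if and only if there is no positive jump rate out of $D$, i.e. $\wdt q_{(i_1,i_2),(j_1,j_2)}=0$ whenever $i_1\le i_2$ and $j_1>j_2$. Hence (\ref{(OPC1)}) is equivalent to building a coupling $\wdt Q$ of $Q^{(1)}$ and $Q^{(2)}$ that places no mass in the forbidden region $\{j_1>j_2\}$ when starting from $D$. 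The whole problem thus reduces to a finite, state-by-state allocation of nonnegative rates.

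Fix $(i_1,i_2)\in D$ and write $\mu_1(\{j\})=q^{(1)}_{i_1 j}$ for $j\ne i_1$ and $\mu_2(\{j\})=q^{(2)}_{i_2 j}$ for $j\ne i_2$ for the two marginal exit-rate measures. The coupling rates $\{\wdt q_{(i_1,i_2),(j_1,j_2)}\}$ must satisfy the marginal identities inherited from (\ref{(OPC0)}): summing over the second coordinate (including the no-move value $j_2=i_2$) must return $q^{(1)}_{i_1 j_1}$ for each $j_1\ne i_1$, and summing over the first coordinate (including $j_1=i_1$) must return $q^{(2)}_{i_2 j_2}$ for each $j_2\ne i_2$. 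Thus $\wdt Q$ is exactly a coupling of the two finite measures $\mu_1,\mu_2$ in which solo moves of a single coordinate are permitted, subject to the constraint that no mass is placed on $\{j_1>j_2\}$. The task is to show such a nonnegative allocation exists precisely under (\ref{(OPC2)}).

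For the construction I would use a distribution-function (quantile) coupling phrased through tail sums. For each threshold $m$ set the upper tails $\overline F_1(m)=\sum_{l\ge m}q^{(1)}_{i_1 l}$, $\overline F_2(m)=\sum_{l\ge m}q^{(2)}_{i_2 l}$ and the lower tails $\underline F_1(m)=\sum_{l\le m}q^{(1)}_{i_1 l}$, $\underline F_2(m)=\sum_{l\le m}q^{(2)}_{i_2 l}$. The coupling is then defined by matching the two exit measures monotonically, pairing high destinations of $\mu_2$ with high destinations of $\mu_1$ and proceeding downward, with leftover mass realized as solo moves; by construction the paired destination $j_1$ of the first coordinate never exceeds the paired destination $j_2$ of the second, so no rate lands in $\{j_1>j_2\}$, and the marginal identities follow by telescoping the tail sums. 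The two halves of (\ref{(OPC2)}) — the upper-tail inequality $\overline F_1(m)\le\overline F_2(m)$ for $i_1\le i_2<m$ and the lower-tail inequality $\underline F_1(m)\ge\underline F_2(m)$ for $m<i_1\le i_2$ — are exactly what guarantee that at every stage enough high (resp.\ low) mass of $\mu_2$ (resp.\ $\mu_1$) is available to absorb the high (resp.\ low) mass of $\mu_1$ (resp.\ $\mu_2$) without ever forcing the first coordinate above the second.

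The step I expect to be the main obstacle is the verification on and immediately above the diagonal $i_1=i_2$, the boundary of $D$ where there is least room: any upward jump of the first coordinate must be matched by an at-least-as-large jump of the second, and the feasibility of doing so with nonnegative, marginal-consistent rates is precisely a Hall/max-flow type condition. Showing that the tail inequalities (\ref{(OPC2)}) are not merely necessary but \emph{sufficient} for this transportation problem to be solvable simultaneously at every $(i_1,i_2)\in D$ with the correct marginals is the crux; once it is in place, assembling the state-by-state allocations into a single generator $\wdt Q$ and invoking the invariance characterization of the first paragraph yields (\ref{(OPC1)}). I would draw the detailed allocation and feasibility argument from \cite{C2004, Z1996, Z1998}.
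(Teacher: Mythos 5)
Your proposal is correct and essentially coincides with the paper's treatment: the paper gives no independent proof of this lemma, quoting it from \cite{C2004, Z1996, Z1998} and then reproducing Zhang's explicit construction --- the basic coupling (\ref{basic}) for $i>j$ and the recursive tail-matching rates (\ref{ZCa})--(\ref{ZCc}) for $i\le j$ --- which is precisely the monotone allocation placing no rate on $\{j_1>j_2\}$ that you describe, with the tail inequalities (\ref{(OPC2)}) supplying feasibility. Your reduction of (\ref{(OPC1)}) to the absence of coupling rates out of $\{(i_1,i_2): i_1\le i_2\}$, and your identification of the diagonal (where solo upward moves of the first coordinate must be synchronized) as the binding case, are exactly the points the cited construction settles, so your outline does not diverge from the paper's route.
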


\begin{Assumption}
\label{OPC2}{\rm
Assume that there exists a generator $\bar{Q}=\bigl(\bar{q}_{i,j}\bigr)$ on $\S$ such that
the following bounds hold:
\beq{(OPC3)}\barray \ad \sup_{x\in \R^d}\sum_{l\ge m}q_{i_{1}l}(x)
\le \sum_{l\ge m}\bar{q}_{i_{2}l} \ \ \hbox{for all} \ \ i_{1}\le
i_{2} <m \ \
\hbox{and}\\
\ad \inf_{x\in \R^d}\sum_{l\le m}q_{i_{1}l}(x) \ge \sum_{l\le m}\bar{q}_{i_{2}l} \ \ \hbox{for all} \ \ m< i_{1}\le
i_{2},\earray\eeq where the matrix $\bigl(q_{ij}(x)\bigr)$ is given in (\ref{1.2}).}\end{Assumption}

If two generators $Q^{(1)}$ and $Q^{(2)}$ satisfy (\ref{(OPC2)}), we simply
write $Q^{(1)}\preceq Q^{(2)}$. For convenience, with a slight abuse
of notation, we denote the matrix $\bigl(q_{ij}(x)\bigr)$ by $Q_x$.
So Assumption~\ref{OPC2} means that for each $x\in \R^{d}$,
$Q_x\preceq \bar{Q}$. By Lemma~\ref{OPC1}, for each $x\in
\R^{d}$, there exists an order-preserving coupling of $Q_x$ and $\bar{Q}$ given in Assumption~\ref{OPC2}. Namely, for each $x\in \R^{d}$, there exists a $Q$-matrix on
$\S\times \S$ such that this $Q$-matrix is an order-preserving
coupling of $Q_x$ and $\bar{Q}$. In fact, such an order-preserving
coupling was constructed explicitly in \cite{Z1996, Z1998}; see also \cite[p.
221]{C2004}. For definiteness, we choose one such coupling and denote it by
$\wdt{Q}(x)=\bigl(\wdt{q}(i,j;m,n)(x)\bigr)$, which
can be expressed explicitly. For the sake of completeness and also
certain subsequent application, we sketch the construction of the coupling
$\wdt{Q}(x)$ here though a method which is essentially not new
(cf. \cite{Z1996, Z1998}).

As mentioned in \cite{Z1996}, we can define the basic coupling of $Q_x$ and $\bar{Q}$ for the points $(i,j)
\in \S\times \S$ with $i>j$ as follows:
\beq{basic}\left
\{\begin{array}{ll} \disp\wdt{q}(i,j;m,n)(x)=
\bigl(q_{ik}(x)-\bar{q}_{jk}\bigr)^{+}, & m=k, n=j, k\neq i,\\
\disp\wdt{q}(i,j;m,n)(x)=\bigl(\bar{q}_{jk}-q_{ik}(x)\bigr)^{+}, & m=i, n=k, k\neq j,\\
\disp\wdt{q}(i,j;m,n)(x)=q_{ik}(x)\wedge \bar{q}_{jk}, & m=k, n=k, (k,k)\neq (i,j),\\
\disp\wdt{q}(i,j;m,n)(x)=0, & \hbox{other}\ \ (m,n) \neq
(i,j),
\end{array}
\right.\eeq and $
\disp\wdt{q}(i,j;i,j)(x):=-\sum_{(m,n) \neq (i,j)}
\wdt{q}(i,j;m,n)(x)$.

Next, we construct the order-preserving
coupling for the points $(i,j) \in \S\times \S$ with $i\le j$, which is the key point to construct a coupling $(\La,\bar \La)$ so that $\p(\La(t)\leq \bar\La(t),\ \forall\, t\geq 0)=1$. For each $n\in \S$, define \beq{ZCa}\barray\ad
a_{nn}(x)=\left \{\begin{array}{ll}q_{in}(x), & n\neq i,\\
0, & n=i,\end{array} \right.\\
\ad
b_{nn}(x)=\left \{\begin{array}{ll}\bar{q}_{jn}, & n\neq j,\\
0, & n=j.\end{array} \right.
\earray\eeq
Then, define the
sequences $\{a_{mn}(x)\}$, $\{b_{mn}(x)\}$ $(m\le n, n\in \S)$: for
$m=n-1$, $n-2$, $\cdots$, $1$ successively as
\beq{ZCb}\barray\ad
a_{mn}(x)=\bigl(a_{m,n-1}(x)\bigr)^{+}-\bigl(b_{m,n-1}(x)\bigr)^{+},
\\
\ad b_{mn}(x)=\bigl(b_{m+1,n}(x)\bigr)^{+}-\bigl(a_{m+1,n}(x)\bigr)^{+}.\earray\eeq
Here and hereafter, $a^{+}=\max\{a,0\}$. Let us give some explanation on this definition
procedure. Clearly, we can define
$a_{12}(x)$ and $b_{12}(x)$ with (\ref{ZCb}) by the well-defined
$a_{11}(x)$, $b_{11}(x)$, $a_{22}(x)$ and $b_{22}(x)$. Suppose the
 $\{a_{m^{\prime}n^{\prime}}(x)\}$,
$\{b_{m^{\prime}n^{\prime}}(x)\}$ $(m^{\prime}\le n^{\prime})$ have
been defined successively for $n^{\prime}=1$, $2$, $\ldots$,
$n-1$. With (\ref{ZCb}) we can further define the case of
$n^{\prime}=n$.
Although $b_{nn}(x)$ is independent of $x$, $b_{mn}(x)$ is $x$-dependent in general. Finally, with the well-defined sequences
$\{a_{mn}(x)\}$, $\{b_{mn}(x)\}$ $(m\le n, n\in \S)$, the desired
coupling is given by \beq{ZCc}\left
\{\begin{array}{ll} \disp\wdt{q}(i,j;m,n)(x)=
\bigl(a_{mn}(x)\bigr)^{+}\wedge \bigl(b_{mn}(x)\bigr)^{+}, & m\le n, m\neq i, n\neq j,\\
\disp\wdt{q}(i,j;\,\,i,\,n)(x)=\bar{q}_{jn}-\sum_{1\le m \le n,\,
m\neq i}\!\!\!\wdt{q}(i,j;m,n)(x), & i\le n \neq j,\\
\disp\wdt{q}(i,j;m,j)(x)=q_{im}(x)-\sum_{n\ge m,\, n\neq
j}\!\!\!\wdt{q}(i,j;m,n)(x), & i\neq m \le j,\\
\disp\wdt{q}(i,j;m,n)(x)=0, & \hbox{other}\ \ (m,n) \neq
(i,j),
\end{array}
\right.\eeq and \[
\disp\wdt{q}(i,j;i,j)(x)=-\sum_{(m,n) \neq (i,j),\, m\le n}
\wdt{q}(i,j;m,n)(x).\]

For every fixed $x\in\R^d$, let $(\La,\bar \La)$ be the continuous-time Markov chain determined by the coupling operator $\wdt Q$ defined in \eqref{ZCc} with $\La(0)\leq \bar \La(0)$. As shown in \cite{Z1996}, the construction of $\widetilde q(i,j;m,n)$ guarantees that the process $\La $ can never jump to the front of $\bar \La $ a.s., i.e. $\p(\La(t)\leq \bar \La(t),\ \forall\,t\geq 0)=1$.

Consequently, the order-preserving coupling $\bigl(X,\Lambda,\bar{\Lambda}\bigr)$ is constructed as follows.
Let $X$ satisfy SDE
(\ref{1.1}) and $(\La,\bar \La)$ be a jumping process on $\S\times\S$ with $(\La(0),\bar \La(0))=(i_0,j_0)$ satisfying
\beq{(OPC4)}
\begin{array}{l}
\p\{(\Lambda(t+\Delta),\bar{\Lambda}(t+\Delta))=(m,n)|
(\Lambda(t),\bar{\Lambda}(t))=(i,j),X(t)=x\}\\
\quad =\left\{\begin{array}{ll}\wdt{q}(i,j;m,n)(x)\Delta
+o(\Delta),
& \hbox{if}\, (m,n)\neq (i,j), \\
1+\wdt{q}(i,j;m,n)(x)\Delta +o(\Delta), & \hbox{if}\,
(m,n)=(i,j),
\end{array} \right.
\end{array}
\eeq provided $\Delta \downarrow 0$. Here $\wdt q(i,j;m,n)$ is determined by \eqref{basic} or \eqref{ZCc} according to $i_0>j_0$ or not.

\begin{lem}\label{OPC3}
Suppose that (\textbf{H1}), (\textbf{H2}), (\textbf{Q1}),  (\textbf{Q2'}) and  Assumption~\ref{OPC2} holds. Then the coupling process
$\bigl(X,\Lambda,\bar{\Lambda}\bigr)$ satisfying \eqref{1.1} and \eqref{(OPC4)} exists, and further
\beq{(OPC7)}  {\p}^{(x,i,j)}\bigl(\Lambda(t)\le
\bar{\Lambda}(t),\ \forall\,t\geq 0\bigr)=1,  \quad x\in \R^{d},\quad
i\le j \in \S. \eeq
In addition, suppose that $\bar{Q}$ is irreducible, then the invariant measure $\bar{\mu}=(\bar{\mu}_{1},\ldots,\bar{\mu}_{M})$ of $\bar{\Lambda}$  exists, and
for each increasing function $h$ on $\S$ and each $(x,i)\in
\R^{d}\times \S$, \beq{(OPC8)}{\p}^{(x,i)}\biggl(\limsup_{t\to
\infty}\frac{1}{t}\int_{0}^{t}h(\Lambda(s))\d s \le \sum_{m\in
\S}h(m)\bar{\mu}_{m}\biggr)=1,\eeq for every initial value $\bigl(X(0),\Lambda(0) \bigr)=(x,i)$.
\end{lem}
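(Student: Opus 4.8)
The plan is to treat the three assertions in turn: existence of the coupled process $(X,\Lambda,\bar\Lambda)$, the pathwise order relation \eqref{(OPC7)}, and the ergodic bound \eqref{(OPC8)}.

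\emph{Existence.} First I would observe that $(X,\Lambda,\bar\Lambda)$ is itself a regime-switching SFDE of the form \eqref{1.1b}, whose discrete component $(\Lambda,\bar\Lambda)$ lives on the enlarged finite state space $\S\times\S$ and is driven by the $x$-dependent $Q$-matrix $\wdt Q(x)=\bigl(\wdt q(i,j;m,n)(x)\bigr)$ built in \eqref{basic}--\eqref{ZCc}. Since the coefficients $a,b,\sigma$ of the $X$-equation depend only on $\Lambda$ and not on $\bar\Lambda$, they inherit (\textbf{H1}) and (\textbf{H2}). It remains to verify (\textbf{Q1}) and (\textbf{Q2'}) for $\wdt Q(x)$. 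Every entry $\wdt q(i,j;m,n)(x)$ is produced from the $q_{ik}(x)$ and the constants $\bar q_{jk}$ by finitely many applications of $+,-,\wedge$ and $(\cdot)^+$ (including the recursion \eqref{ZCa}--\eqref{ZCb}), all continuous operations, so $x\mapsto\wdt q(i,j;m,n)(x)$ is continuous and (\textbf{Q1}) holds on $\S\times\S$. For (\textbf{Q2'}), by the marginal identities of the coupling (summing $\wdt q(i,j;m,n)(x)$ over one coordinate recovers $q_{im}(x)$ or $\bar q_{jn}$, see \cite{Z1996,C2004}) the total jump rate out of $(i,j)$ is at most $q_i(x)+\bar q_j$, where $\bar q_j:=-\bar q_{jj}$; this has polynomial growth in $|x|$ by (\textbf{Q2'}). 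Theorem \ref{exiuni}, which holds verbatim for any finite discrete state space, applied with $\S$ replaced by $\S\times\S$, then yields a unique nonexplosive solution of \eqref{1.1} and \eqref{(OPC4)}.

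\emph{Order preservation.} Next I would prove \eqref{(OPC7)} directly from the construction. The key point is that for every $x$ and every $(i,j)$ with $i\le j$, the coupling rates \eqref{ZCc} vanish on all targets $(m,n)$ with $m>n$; that is, $(\Lambda,\bar\Lambda)$ has zero rate of leaving the order region $\{(k,l)\in\S\times\S:\ k\le l\}$. Using the Skorokhod representation of the jump process via a Poisson random measure, exactly as in \eqref{2.3}, the jumps of $(\Lambda,\bar\Lambda)$ occur only along a discrete set of times $\zeta_1<\zeta_2<\cdots$, and the jump at $\zeta_k$ is governed by the frozen generator $\wdt Q(X(\zeta_k))$, which keeps the pair inside $\{k\le l\}$. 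A short induction over these jump times, together with the fact that nothing changes in between, shows that from $\Lambda(0)\le\bar\Lambda(0)$ the relation $\Lambda(t)\le\bar\Lambda(t)$ persists for all $t\ge0$ almost surely, which is \eqref{(OPC7)}.

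\emph{Invariant measure and the ergodic bound.} For the last assertion I would first note, again from the marginality of the coupling, that summing $\wdt q(i,j;m,n)(x)$ over the first coordinate returns $\bar q_{jn}$ for $n\neq j$, independently of $i$ and $x$; hence the marginal process $\bar\Lambda$ is a time-homogeneous continuous-time Markov chain on $\S$ generated by the constant matrix $\bar Q$. If $\bar Q$ is irreducible then, $\S$ being finite, $\bar\Lambda$ admits a unique invariant probability $\bar\mu=(\bar\mu_1,\dots,\bar\mu_M)$ and is ergodic, so by the strong law of large numbers for finite Markov chains
\[\lim_{t\to\infty}\frac1t\int_0^t h(\bar\Lambda(s))\,\d s=\sum_{m\in\S}h(m)\bar\mu_m\qquad\text{a.s.},\]
the limit being independent of $\bar\Lambda(0)$. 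Realizing the coupling with $\Lambda(0)=i$ and $\bar\Lambda(0)=i$ (admissible since $i\le i$) and using that $h$ is increasing together with $\Lambda(s)\le\bar\Lambda(s)$ for all $s$ a.s. from \eqref{(OPC7)}, we get $h(\Lambda(s))\le h(\bar\Lambda(s))$, whence $\frac1t\int_0^t h(\Lambda(s))\,\d s\le\frac1t\int_0^t h(\bar\Lambda(s))\,\d s$; taking $\limsup_{t\to\infty}$ and invoking the displayed limit yields \eqref{(OPC8)}.

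\emph{Main obstacle.} The step demanding the most care is the pathwise order preservation for the \emph{state-dependent} coupling: although for each frozen $x$ the generator $\wdt Q(x)$ is order-preserving by the construction of \cite{Z1996}, one must ensure this is compatible with the continuously evolving $X(t)$ feeding into the rates. Handling this cleanly through the Skorokhod/Poisson representation, checking at each jump time that the order cannot be reversed, is the technical heart of the argument; the existence and ergodic steps are then comparatively routine.
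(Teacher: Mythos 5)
Your proposal is correct, and for the first two assertions it follows the paper's own route: verifying (\textbf{Q1}) and (\textbf{Q2'}) for the coupling matrix $\wdt Q(x)$ and invoking Theorem~\ref{exiuni} on the enlarged state space $\S\times\S$ (you in fact supply more detail than the paper, which asserts these checks are ``easy''), and then deducing \eqref{(OPC7)} from the structural fact that the rates in \eqref{ZCc} never point from $\{k\le l\}$ into $\{k>l\}$, handled at the jump times of the Skorokhod/Poisson representation exactly as the paper does implicitly when it says the construction prevents $\Lambda$ from jumping in front of $\bar\Lambda$ even though the rates are modulated by the time-varying $X$. Where you genuinely diverge is the final ergodic step. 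The paper first records the pathwise comparison of time averages, then invokes Egorov's theorem to upgrade the a.s.\ ergodic limit of $\bar\Lambda$ to uniform convergence on a set of probability at least $1-\delta$, produces a deterministic time $T(\varepsilon)$, and passes from $\p^{(x,i,j)}$ to $\p^{(x,i)}$ by noting the bound is $j$-independent before letting $\varepsilon,\delta\to0$. You instead argue directly: on the a.s.\ event where $\Lambda(s)\le\bar\Lambda(s)$ for all $s$ and where the strong law $\frac1t\int_0^t h(\bar\Lambda(s))\,\d s\to\sum_m h(m)\bar\mu_m$ holds, monotonicity of $h$ gives $\limsup_t\frac1t\int_0^t h(\Lambda(s))\,\d s\le\lim_t\frac1t\int_0^t h(\bar\Lambda(s))\,\d s$, and you realize the original law by starting the coupling on the diagonal $\bar\Lambda(0)=\Lambda(0)=i$, using the marginality of \eqref{ZCc} (column sums returning $\bar q_{jn}$, row sums returning $q_{im}(x)$) to identify the two marginals. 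Your argument is valid and strictly more elementary --- the Egorov/$T(\varepsilon)$ machinery is not needed for the conclusion, since a $\limsup$ of an everywhere-in-$t$ pathwise inequality against an a.s.\ convergent majorant passes to the limit pointwise on the good event; the paper's version buys nothing extra here beyond a quantitative $T(\varepsilon)$ that the statement does not use. The one point you rightly flag as delicate, and which both proofs ultimately rest on the construction of \cite{Z1996,Z1998} for, is that order preservation survives the state-dependence of $\wdt Q(X(t))$; your jump-time induction is the honest way to discharge it, and it is consistent with (indeed fills in) the paper's brief assertion.
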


\begin{proof} By the definition of $a_{nn}(x)$, $a_{mn}(x)$ and $\wdt q(i,j;m,n)$, it is easy to check the validation of  conditions (\textbf{Q1}) and (\textbf{Q2'})  for $\wdt Q(x)$. Therefore, Theorem \ref{exiuni} ensures that the system $(X,\La,\bar \La)$ satisfying \eqref{1.1} and \eqref{(OPC4)} exists.
Although the transition rate matrix of $(\La,\bar \La)$ now depends on $X$ which is time varying, the construction of $\wdt Q_x$ still can ensure that $\La(t)$ cannot jump to the front of $\bar \La(t)$ a.s. if $\La(0)\leq \bar \La(0)$.
Hence, (\ref{(OPC7)}) holds.

Using the right continuity of
$\bigl(X,\Lambda,\bar{\Lambda}\bigr)$, from (\ref{(OPC7)}) we
obtain that for each given increasing function $h$ on $\S$, $x\in
\R^{d}$ and $i\le j \in \S$, \beq{(OPC9)}
{\p}^{(x,i,j)}\biggl(\frac{1}{t}\int_{0}^{t}h(\Lambda(s))\d s\le
\frac{1}{t}\int_{0}^{t}h(\bar{\Lambda}(s))\d s, \ \ \forall\,
t\ge 0\biggr)=1.\eeq Since $\bar{Q}$ is irreducible and $\S$ is a finite state space, the
associated Markov chain $\bar{\Lambda}$ is ergodic with the
invariant probability measure given by
$\bar{\mu}=(\bar{\mu}_{1},\ldots,\bar{\mu}_{M})$. By the
ergodic property of the Markov chain, we have
\beq{(OPC10)} {\p} \biggl(\lim_{t\to
\infty}\frac{1}{t}\int_{0}^{t}h(\bar{\Lambda}(s))\d s = \sum_{m\in
\S}h(m)\bar{\mu}_{m}\biggr)=1.
\eeq
For any arbitrarily given $\delta>0$, by Egorov's theorem, we then get
\beq{(OPC11)}
{\p} \biggl(\frac{1}{t}\int_{0}^{t}h(\bar{\Lambda}(s))\d s \to
\sum_{m\in \S}h(m)\bar{\mu}_{m} \ \ \hbox{uniformly as} \ \ t \to
\infty \biggr)\ge 1-\delta. \eeq Thus, for any given
$\varepsilon>0$, there exists a $T(\varepsilon)>0$ such that
\beq{(OPC12)}
1-\delta   \le
{\p} \biggl(\frac{1}{t}\int_{0}^{t}h(\bar{\Lambda}(s))\d s \le
\sum_{m\in \S}h(m)\bar{\mu}_{m}+\varepsilon \ \ \hbox{for all} \ \
t\ge T(\varepsilon) \biggr).\eeq
Therefore,
combining (\ref{(OPC9)}) and (\ref{(OPC12)}), we derive that for
every increasing function $h$ on $\S$, $x\in \R^{d}$ and $i\le
j \in \S$, \beq{(OPC13)}
 {\p}^{(x,i,j)}\biggl(\frac{1}{t}\int_{0}^{t}h(\Lambda(s))\d s\le
\sum_{m\in \S}h(m)\bar{\mu}_{m}+\varepsilon \ \ \hbox{for all} \ \
t \ge T(\varepsilon)\biggr)\ge 1-\delta,
\eeq
which yields that
\beq{(OPC14)}
{\p}^{(x,i)}\biggl(\frac{1}{t}\int_{0}^{t}h(\Lambda(s))\d s\le \sum_{m\in
\S}h(m)\bar{\mu}_{m}+\varepsilon \ \ \hbox{for all} \ \ t \ge
T(\varepsilon)\biggr)\ge 1-\delta \eeq since the left hand side of \eqref{(OPC13)} does not depend on $j$. So,
\beq{(OPC15)}
{\p}^{(x,i)}\biggl(\limsup_{t\to\infty}\frac{1}{t}
\int_{0}^{t}h(\Lambda(s))\d s\le \sum_{m\in
\S}h(m)\bar{\mu}_{m}+\varepsilon\biggr)\ge 1-\delta.\eeq Finally,
letting $\varepsilon$ and $\delta$ tend to $0$, we arrive at (\ref{(OPC8)}).
\end{proof}

Such a Markov chain $\bar{\Lambda}$ having the properties stated in
Lemma~\ref{OPC3} will be called a upper control Markov chain. It
will serve as an {\it upper envelop} as mentioned above.
On the other hand, a lower control Markov chain $\Lambda^{\ast}$ can be constructed under proper conditions.

\begin{Assumption}\label{OPC4}{\rm
Assume that there exists a generator
$Q^{\ast}=\bigl(q^{\ast}_{i,j}\bigr)$ on $\S$ such that the following bounds hold: \beq{(OPC17)}\barray \ad \sum_{j\ge
m}q^{\ast}_{i_{1},j} \le \inf_{x\in \R^d}\sum_{j\ge m}q_{i_{2}j}(x)
\ \ \hbox{for all} \ \ i_{1}\le i_{2} <m \ \
\hbox{and}\\
\ad \sum_{j\le m}q^{\ast}_{i_{1},j} \ge \sup_{x\in \R^d}\sum_{j\le
m}q_{i_{2}j}(x) \ \ \hbox{for all} \ \ m< i_{1}\le
i_{2}.\earray\eeq}\end{Assumption}
For each $x\in \R^{d}$, an order-preserving coupling of $Q^{\ast}$ and $Q(x)$ can be constructed explicitly
(see \cite[p. 221]{C2004}). Actually, replace the $(q_{ij}(x))$ and
$(\bar{q}_{i,j})$ in (\ref{ZCa})--(\ref{ZCc}) by $(q^{\ast}_{i,j})$
and $(q_{ij}(x))$ respectively, we can construct a coupling operator $\wdt{Q}(x)=(\wdt q(i,j;m,n)(x))$
for each $x\in \R^{d}$ and $i\geq j$.  When $i<j$, we still use the basic coupling given in \eqref{basic} by replacing $\bar q_{j,k}$ with $q^\ast_{j,k}$.  We now
construct an order-preserving coupling  process $\bigl(X,\Lambda^{\ast},\Lambda\bigr)$
as follows. Let   $X$ satisfy  SDE (\ref{1.1}) and $(\La^\ast, \La)$ be a jumping process on $\S\times\S$ with $(\La^\ast(0),\La(0))=(i_0,j_0)$ satisfying
\beq{(OPC18)}
\begin{array}{l}
\p\{(\Lambda^{\ast}(t+\Delta),\Lambda(t+\Delta))=(m,n)|
(\Lambda^{\ast}(t),\Lambda(t))=(i,j),X(t)=x\}\\
\quad =\left \{\begin{array}{ll}\wdh{q}(i,j;m,n)(x)\Delta
+o(\Delta),
& \hbox{if}\, (m,n)\neq (i,j), \\
1+\wdh{q}(i,j;m,n)(x)\Delta +o(\Delta), & \hbox{if}\,
(m,n)=(i,j),
\end{array} \right.
\end{array}
\eeq provided $\Delta \downarrow 0$. Similar to Lemma \ref{OPC3}, we can prove the
following lemma.

\begin{lem}\label{OPC5}
Suppose that (\textbf{H1}), (\textbf{H2}), (\textbf{Q1}),  (\textbf{Q2'}) and  Assumption~\ref{OPC4} holds. Then the coupling process
$\bigl(X,\Lambda^\ast, \Lambda\bigr)$ satisfying \eqref{1.1} and \eqref{(OPC4)} exists, and further
\beq{(OPC19)}  {\p}^{(x,j,i)}\bigl(
\Lambda(t)\geq \Lambda^{\ast}(t),\ \forall\,t\geq 0\bigr)=1 \   \text{with $(X(0), \La^\ast(0),\La(0))=(x,j,i)$, $i\geq j$}.
\eeq  In addition, suppose that $Q^{\ast}$ is irreducible, then for each
increasing function $h$ on $\S$ and each $(x,i)\in \R^{d}\times
\S$, \beq{(OPC20)}{\p}^{(x,i)}\biggl(\liminf_{t\to
\infty}\frac{1}{t}\int_{0}^{t}h(\Lambda(s))\d s \ge \sum_{m\in
\S}h(m)\mu^{\ast}_{m}\biggr)=1,\eeq where $\mu^{\ast}=(\mu^{\ast}_{1},\ldots,\mu^{\ast}_{M})$ is the invariant
probability measure associated with $\Lambda^{\ast}$.
\end{lem}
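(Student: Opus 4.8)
The plan is to closely mirror the proof of Lemma~\ref{OPC3}, replacing the upper envelope $\bar\La$ by the lower envelope $\La^\ast$ and reversing every inequality. First I would settle existence. The coupling generator $\wdh Q(x)=\bigl(\wdh q(i,j;m,n)(x)\bigr)$ entering \eqref{(OPC18)} is assembled from $Q^\ast$ and $Q_x$ exactly as in the paragraph preceding \eqref{(OPC18)}, namely through \eqref{basic} and \eqref{ZCa}--\eqref{ZCc} with $(q^\ast_{i,j})$ and $(q_{ij}(x))$ in place of $(q_{ij}(x))$ and $(\bar q_{i,j})$. Its entries are obtained from $q_{ij}(x)$ and the constants $q^\ast_{ij}$ by the operations $(\cdot)^+$, $\wedge$ and finite sums; hence continuity in $x$ (\textbf{Q1}) and the polynomial growth bound (\textbf{Q2'}) pass from $Q_x$ to $\wdh Q(x)$ unchanged. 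Theorem~\ref{exiuni} then produces a unique nonexplosive solution $(X,\La^\ast,\La)$ of \eqref{1.1} and \eqref{(OPC18)}.

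Next I would read off the order preservation \eqref{(OPC19)}. As recorded after \eqref{ZCc} (following \cite{Z1996}), the explicit form of the coupling rates prevents the first coordinate from ever jumping ahead of the second, so that $\La^\ast(t)\le\La(t)$ for all $t\ge0$ a.s.\ whenever $\La^\ast(0)\le\La(0)$. Exactly as in Lemma~\ref{OPC3}, the fact that the rates $\wdh Q(X(t))$ now vary with the time-dependent path of $X$ causes no difficulty, because the comparison is carried out pathwise in the Skorokhod/Poisson representation; this yields \eqref{(OPC19)}.

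For the ergodic bound \eqref{(OPC20)} I would fix an increasing $h$ and an initial point $(x,i)$, and run the coupling with $\La^\ast(0)=j\le i=\La(0)$. Since $h$ is increasing and $\La^\ast\le\La$, the pathwise inequality $\frac1t\int_0^t h(\La^\ast(s))\d s\le\frac1t\int_0^t h(\La(s))\d s$ holds for all $t\ge0$ a.s. Irreducibility of $Q^\ast$ on the finite set $\S$ makes $\La^\ast$ ergodic, so $\frac1t\int_0^t h(\La^\ast(s))\d s\to\sum_{m\in\S}h(m)\mu^\ast_m$ a.s.; by Egorov's theorem this convergence is uniform off an event of probability at most $\de$, whence for each $\veps>0$ there is a deterministic $T(\veps)$ with
\[\p\Big(\tfrac1t\textstyle\int_0^t h(\La^\ast(s))\d s\ge\sum_{m\in\S}h(m)\mu^\ast_m-\veps\ \text{for all }t\ge T(\veps)\Big)\ge1-\de.\]
Combining this with the pathwise inequality shows that the event $\{\frac1t\int_0^t h(\La(s))\d s\ge\sum_{m\in\S}h(m)\mu^\ast_m-\veps$ for all $t\ge T(\veps)\}$ has probability at least $1-\de$.

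The step I expect to require the most care is the same marginalization used for \eqref{(OPC14)}: this last event is measurable with respect to the marginal process $(X,\La)$ and involves only the deterministic constants $\veps,T(\veps)$, so it does not see $\La^\ast$ or its starting point $j$. By the marginality \eqref{(OPC0)} of the coupling its $\p^{(x,j,i)}$-probability equals its $\p^{(x,i)}$-probability and is independent of $j$. Passing to the $\liminf$ as $t\to\infty$ and then sending $\veps\downarrow0$, $\de\downarrow0$ yields \eqref{(OPC20)}. The role of the Egorov step is precisely to convert the almost-sure convergence of the $\La^\ast$-average, which is an event depending on $\La^\ast$, into a ``for all $t\ge T(\veps)$'' statement about the $\La$-average alone, and it is this reformulation that makes the marginalization legitimate.
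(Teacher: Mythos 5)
Your proposal is correct and follows exactly the route the paper intends: the paper omits this proof with the remark ``Similar to Lemma~\ref{OPC3}, we can prove the following lemma,'' and your argument is precisely that mirror image --- existence via Theorem~\ref{exiuni} after noting that (\textbf{Q1}) and (\textbf{Q2'}) pass to the coupled generator, pathwise order preservation from the construction of \cite{Z1996}, and then the ergodicity--Egorov--marginalization chain of \eqref{(OPC9)}--\eqref{(OPC15)} with all inequalities reversed. Your closing observation about why the Egorov step legitimizes dropping the dependence on $j$ matches the paper's own justification for passing from \eqref{(OPC13)} to \eqref{(OPC14)}.
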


In what follows, we provide two concrete examples to illustrate the application of order-preserving couplings to construct upper control and lower control Markov chains for the jump component of state-dependent regime-switching processes.

\begin{Example}\label{OPC6}{\rm
Consider the case $d=1$, $\S=\{1,2\}$.  Let   $Q_x$ in (\ref{1.2})
be given by
$$Q_x=\bigl(q_{ij}(x)\bigr)=\left
(\begin{array}{cc}
  \sin^{2}x-2& 2-\sin^{2}x \\
1+|\cos x|& -1-|\cos x|
\end{array} \right ).$$ Meanwhile, we choose
$$\bar{Q}=(\bar{q}_{i,j})=\left (\begin{array}{cc}
  -2& 2\\
1& -1
\end{array} \right) \qquad \hbox{and}
\qquad Q^{\ast}=(q^{\ast}_{i,j})=
\left (\begin{array}{cc}-1& 1\\
2& -2\end{array} \right).$$ It can be verified that $Q_x \preceq
\bar{Q}$ and $Q^{\ast} \preceq Q(x)$ for all $x \in \R$ and
that both $\bar{Q}$ and $Q^{\ast}$ are irreducible and their
associated invariant probability measures are given by
$\bar{\mu}=(\bar{\mu}_{1},\bar{\mu}_{2})=({1}/{3},{2}/{3})$ and
$\mu^\ast=(\mu^{\ast}_{1},\mu^{\ast}_{2})=({2}/{3},{1}/{3})$
respectively. Thus, for the system $(X,\Lambda)$ satisfying
(\ref{1.1}) and (\ref{1.2}) and any given increasing function
$h$ on $\S=\{1,2\}$, by virtue of Lemma~\ref{OPC3}, we have
\beq{(OPC8a)}{\p}^{(x,i)}\biggl(\limsup_{t\to
\infty}\frac{1}{t}\int_{0}^{t}h(\Lambda(s))\d s \le
\frac{h(1)+2h(2)}{3}\biggr)=1;\eeq and by virtue of
Lemma~\ref{OPC5}, we have
\beq{(OPC8a2)}{\p}^{(x,i)}\biggl(\liminf_{t\to
\infty}\frac{1}{t}\int_{0}^{t}h(\Lambda(s))\d s \ge
\frac{2h(1)+h(2)}{3}\biggr)=1.\eeq}\end{Example}

\begin{Example}\label{OPC7}{\rm
Consider the case $d=1$, $\S=\{1,2,3\}$. Let $Q(x)$ in (\ref{1.2})
be defined by
$$Q(x)=\bigl(q_{ij}(x)\bigr)=
\left(\begin{array}{ccc} -3-|\cos x|+\sin^{2} x & 1+|\cos x|
  & 2-\sin^{2} x\\
1+\frac{x^2}{1+x^2} & -2-\frac{x^2}{1+x^2} & 1\\
2+|\sin x| & 1+\frac{|x|}{1+|x|} & -3-|\sin x|-\frac{|x|}{1+|x|}
\end{array} \right).$$ Meanwhile, we choose
$$\bar{Q}=(\bar{q}_{i,j})=
\left(\begin{array}{ccc}
  -4 & 2 & 2\\
1 & -3 & 2\\
2 & 1 & -3
\end{array}\right) \quad \hbox{and} \quad
Q^{\ast}=(q^{\ast}_{i,j})= \left(\begin{array}{ccc}
  -2 & 1 & 1\\
3 & -3 & 0\\
3 & 2 & -5
\end{array}\right).$$ For any $x \in
\R $, it is easy to see that
$$\barray
\ad q_{12}(x)+q_{13}(x)\le \bar{q}_{1,2}+\bar{q}_{1,3}, \quad q_{13}(x)\le \bar{q}_{1,3},\\
\ad q_{13}(x)\le \bar{q}_{2,3}, \quad q_{23}(x)\le \bar{q}_{2,3},\\
\ad q_{21}(x)\ge \bar{q}_{2,1}, \quad q_{21}(x)\ge \bar{q}_{3,1},\\
\ad q_{31}(x)+q_{32}(x)\ge \bar{q}_{3,1}+\bar{q}_{3,2}, \quad
q_{31}(x)\ge \bar{q}_{3,1}\earray$$ and that
$$\barray
\ad q^{\ast}_{1,2}+q^{\ast}_{1,3}\le q_{12}(x)+q_{13}(x),
\quad q^{\ast}_{1,3}\le q_{13}(x),\\
\ad q^{\ast}_{1,3}\le q_{23}(x), \quad q^{\ast}_{2,3}\le q_{23}(x),\\
\ad q^{\ast}_{2,1}\ge q_{21}(x), \quad q^{\ast}_{2,1}\ge q_{31}(x),\\
\ad q^{\ast}_{3,1}+q^{\ast}_{3,2}\ge q_{31}(x)+q_{32}(x), \quad
q^{\ast}_{3,1}\ge q_{31}(x).\earray$$ Hence, we get that $Q(x)
\preceq \bar{Q}$ and $Q^{\ast} \preceq Q(x)$ for all $x \in
\R $, it is easy to see that
$$\barray
\ad q_{12}(x)+q_{13}(x)\le q^{\ast}_{1,2}+q^{\ast}_{1,3}, \quad q_{13}(x)\le q^{\ast}_{1,3},\\
\ad q_{13}(x)\le q^{\ast}_{2,3}, \quad q_{23}(x)\le q^{\ast}_{2,3},\\
\ad q_{21}(x)\ge q^{\ast}_{2,1}, \quad q_{21}(x)\ge q^{\ast}_{3,1},\\
\ad q_{31}(x)+q_{32}(x)\ge q^{\ast}_{3,1}+q^{\ast}_{3,2}, \quad
q_{31}(x)\ge q^{\ast}_{3,1}\earray$$ and that
$$\barray
\ad q_{\ast}(1,2)+q_{\ast}(1,3)\le q_{12}(x)+q_{13}(x),
\quad q_{\ast}(1,3)\le q_{13}(x),\\
\ad q_{\ast}(1,3)\le q_{23}(x), \quad q_{\ast}(2,3)\le q_{23}(x),\\
\ad q_{\ast}(2,1)\ge q_{21}(x), \quad q_{\ast}(2,1)\ge q_{31}(x),\\
\ad q_{\ast}(3,1)+q_{\ast}(3,2)\ge q_{31}(x)+q_{32}(x), \quad
q_{\ast}(3,1)\ge q_{31}(x).\earray$$ Hence, we get that $Q(x)
\preceq Q^{\ast}$ and $Q_{\ast} \preceq Q(x)$ for all $x \in
\R^{1}$. Clearly, both $\bar{Q}$ and $Q^{\ast}$ are irreducible
and their associated invariant probability measures are given by
$\bar{\mu}=(\bar{\mu}_{1},\bar{\mu}_{2},\bar{\mu}_{3})
=({7}/{25},{8}/{25},{2}/{5})$
and
$\mu^{\ast}=(\mu^{\ast}_{1},\mu^{\ast}_{2},\mu^{\ast}_{3})
=({3}/{5},{7}/{25},{3}/{25})$ respectively. Thus, for the system $(X,\Lambda)$ satisfying (\ref{1.1}) and (\ref{1.2}) and any given
increasing function $h$ on $\S=\{1,2,3\}$, by virtue of
Lemma~\ref{OPC3}, we have
\beq{(OPC8b)}{\p}^{(x,i)}\biggl(\limsup_{t\to
\infty}\frac{1}{t}\int_{0}^{t}h(\Lambda(s))\d s \le
\frac{7h(1)+8h(2)+10h(3)}{25}\biggr)=1;\eeq and by virtue of
Lemma~\ref{OPC5}, we have
\beq{(OPC8b2)}{\p}^{(x,i)}\biggl(\liminf_{t\to
\infty}\frac{1}{t}\int_{0}^{t}h(\Lambda(s))\d s \ge
\frac{15h(1)+7h(2)+3h(3)}{25}\biggr)=1.\eeq}\end{Example}

\section{Almost sure stability of regime-switching SFDE}\label{Almost}

To make the idea clear, in this work we shall study the stability of a regime-switching system with linear feedback control as we did in \cite{Shao17a}. Recall the equation satisfied by $(X(t))$, i.e.
\begin{equation}\label{1.1a}
\d X(t)  =\big[a(X(t),\La(t))-b(\La(\de(t)))X(\de(t))\big]\d t+\sigma(X(t),\La(t))\d W(t),\ \
X(0)  =x \in \R^{d}.
\end{equation}
We would like to point out that these constants $b_i$, $i\in\S$ do not have to be all positive.
In order to show the almost sure stability of the system \eqref{1.1a} and \eqref{1.2},
we shall apply the method in \cite{Mao15}, and the key point is to prove
\[\int_0^\infty \E |X(t)|^2\d t<\infty.\]
As mentioned in \cite[Remark 3.3]{Shao17a}, we cannot show the finiteness of the following quantity at that time:
\[\E\int_0^\infty \e^{\int_0^t g(\La(\de(s)))\d s}\d t<\infty\]
for a general function $g$. However, with the help of Lemma \ref{t-2.1}, we can handle this quantity now. From now on, we suppose the coefficient $a(\cdot,\cdot):\R^d\times\S\ra \R^d$ satisfies the following condition.
\begin{itemize}
   \item[(\textbf{H3})] \ There exists a positive constant $M_a$ such that $|a(x,i)|\leq M_a |x|$ for all $(x,i)\in \R^d\times \S$.
\end{itemize}

To make our computation below more precisely, we give out a more explicit construction of the probability space used in the sequel.
Let
\[\Omega_1=\{\omega|\ \omega:[0,\infty)\ra \R^d \ \text{is continuous with $\omega(0)=0$}\},
\] which is endowed with the locally uniform convergence topology and the Wiener measure $\p_1$ so that the coordinate process $W(t,\omega):=\omega(t)$, $t\geq 0$, is a standard $d$-dimensional Brownian motion.
Let $(\Omega_2,\mathscr F_2,\p_2)$ be a probability space and $\Pi_{\R}$ be the totality of point functions on $\R$. For a point function $(p(t))$, $D_{p}$ denotes its domain, which is a countable subset of $[0,\infty)$. Let $p:\Omega_2\ra \Pi_{\R}$ be a Poisson point process with counting measure $N(\d t,\d z)$ on $(0,\infty)\times \R_{+}$ defined by
\begin{equation}
  \label{3.1} N((0,t)\times U)=\#\{s\in D_{p}|\ s\leq t,\ p(s)\in U\}, \quad  t>0, \quad U\in \mathscr B(\R_{+}),
\end{equation}
and its intensity measure is $\d t\times\m(\d z)$.
Set $(\Omega,\mathscr F,\p)=(\Omega_1\times\Omega_2,\mathscr B(\Omega_1)\times \mathscr F_2,\p_1\times\p_2)$, then under $\p=\p_1\times\p_2$, for $\omega=(\omega_1,\omega_2)$, $t\mapsto \omega_1(t)$ is a Wiener process, which is independent of the Poisson point process $t\mapsto p(t,\omega_2)$. Throughout this work, we will work on this probability space $(\Omega,\mathscr F,\p)$. Define \[\E^N[\,\cdot\,](\omega_2)=\E[\,\cdot\,|\mathscr F_2](\omega_2)\]
to be the conditional expectation with respect to the $\sigma$-algebra $\mathscr F_2$.

\begin{lem}\label{t3.1}
Let $(X(t),\La(t))$ be the solution of \eqref{1.1a} and \eqref{1.2}. Suppose conditions (\textbf{H1})-(\textbf{H3}), (\textbf{Q1}), (\textbf{Q2}) and (\textbf{Q3}) hold. Set
\begin{equation}\label{3.3}
K(\tau)=2\tau(2\bar C+ M_a+\bar b)\e^{(2\bar C+3M_a+\bar b)\tau},
\end{equation} where $\bar C=\max_{i\in\S} C(i)$ and $\bar b=\max_{i\in \S} b(i)$. Moreover, assume $\tau$ is sufficiently small so that $K(\tau)<1$. Then it holds
\begin{equation}\label{3.2}
\E^N|X(t)-X(\de(t))|^2(\omega_2)\leq \frac{K(\tau)}{1-K(\tau)}\E^N|X(t)|^2(\omega_2).
\end{equation}
\end{lem}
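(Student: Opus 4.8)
The plan is to estimate the increment $X(t)-X(\de(t))$ by working conditionally on the realization of the jump process $(\La(t))$, i.e. under the conditional expectation $\E^N[\,\cdot\,](\omega_2)$, which fixes $\omega_2$ and hence the whole trajectory of $\La$. On the interval $[\de(t),t]$, whose length is at most $\tau$, the process satisfies
\[
X(t)-X(\de(t))=\int_{\de(t)}^{t}\big[a(X(s),\La(s))-b(\La(\de(s)))X(\de(s))\big]\d s+\int_{\de(t)}^{t}\sigma(X(s),\La(s))\d W(s).
\]
First I would apply It\^o's formula to $|X(s)-X(\de(t))|^2$ on $[\de(t),t]$, take $\E^N$ (the stochastic integral against $W$ vanishes in this conditional expectation because $W$ is independent of $\mathscr F_2$ and the integrand is adapted), and use the linear-growth side of \textbf{(H1)} together with \textbf{(H3)} and the bound $\bar b=\max_i b(i)$ to control the drift and diffusion contributions. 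The elementary inequality $2\la u,v\raa\le |u|^2+|v|^2$ and $|b(\La(\de(s)))X(\de(s))|\le \bar b|X(\de(s))|$ let me bound the right-hand side by a constant times $\int_{\de(t)}^{t}\E^N\big[|X(s)-X(\de(t))|^2+|X(\de(t))|^2\big]\d s$, where the constant aggregates $\bar C$, $M_a$, and $\bar b$; here the key is that $\de(s)=\de(t)$ throughout this interval, so $X(\de(s))=X(\de(t))$ is constant in $s$.

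The central mechanism is a Gronwall-type closure. Writing $\phi(t):=\E^N|X(t)-X(\de(t))|^2(\omega_2)$, the integral inequality has the schematic form $\phi(t)\le c_1\tau\,\E^N|X(\de(t))|^2+c_2\int_{\de(t)}^{t}\phi(s)\d s$ after collecting the cross terms. Applying Gronwall's lemma over the interval of length at most $\tau$ converts this into $\phi(t)\le c_1\tau\,\e^{c_2\tau}\,\E^N|X(\de(t))|^2(\omega_2)$, and matching the constants $c_1,c_2$ against the exponents appearing in \eqref{3.3} should reproduce $\E^N|X(t)-X(\de(t))|^2\le K(\tau)\,\E^N|X(\de(t))|^2$. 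The exponent $2\bar C+3M_a+\bar b$ in $K(\tau)$ strongly suggests that the Gronwall constant is exactly $2\bar C+3M_a+\bar b$, while the prefactor $2\tau(2\bar C+M_a+\bar b)$ comes from the one-step drift-plus-diffusion estimate before iterating.

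The final step converts the bound in terms of $\E^N|X(\de(t))|^2$ into one in terms of $\E^N|X(t)|^2$, which is what \eqref{3.2} asserts. I would use the triangle inequality $|X(\de(t))|\le |X(t)|+|X(t)-X(\de(t))|$, square, and absorb: $\E^N|X(\de(t))|^2\le (1+\veps)\E^N|X(t)|^2+(1+\veps^{-1})\phi(t)$, then feed this back into $\phi(t)\le K(\tau)\E^N|X(\de(t))|^2$ and solve for $\phi(t)$, yielding $\phi(t)\le \frac{K(\tau)}{1-K(\tau)}\E^N|X(t)|^2$ provided $K(\tau)<1$. I expect the main obstacle to be bookkeeping: tracking the precise constants through the It\^o estimate and the Gronwall step so that they collapse to exactly the prefactor and exponent displayed in \eqref{3.3}, and making sure the absorption in the last step produces the clean denominator $1-K(\tau)$ rather than a weaker constant. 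The conditioning on $\omega_2$ is conceptually the delicate point but causes no real difficulty, since fixing the $\La$-path only changes the coefficients to measurable (in $s$) but still adapted functions, and the independence of $W$ from $\mathscr F_2$ guarantees the martingale term drops out under $\E^N$.
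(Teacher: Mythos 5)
Your proposal follows the paper's proof essentially step for step: It\^o's formula for $|X(s)-X(\de(t))|^2$ on $[\de(t),t]$ under $\E^N$ (martingale term vanishing by independence of $W$ from $\mathscr F_2$), a one-step drift-plus-diffusion estimate and Gronwall with constant exactly $2\bar C+3M_a+\bar b$, and finally absorbing $\E^N|X(\de(t))|^2$ back into $\E^N|X(t)|^2$ — the paper does this last step with the inequality $|X(\de(t))|^2\le 2\big(|X(t)-X(\de(t))|^2+|X(t)|^2\big)$, i.e.\ your $\veps$-absorption with $\veps=1$. The only bookkeeping point is that the intermediate Gronwall bound is $\tfrac{1}{2}K(\tau)\,\E^N|X(\de(t))|^2$, the factor $2$ in \eqref{3.3} being precisely the one generated by that absorption, which then produces the clean denominator $1-K(\tau)$.
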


\begin{proof} For any $t\ge 0$, there is a unique integer $\upsilon\ge 0$ for $t\in [\upsilon\tau,(\upsilon+1)\tau)$. Moreover, $\de(s)=\upsilon\tau$ for
$s\in[\upsilon\tau,t]$. From \eqref{1.1a} we have that
\begin{align*}
    X(t)-X(\de(t))&=X(t)-X(\upsilon\tau)\\
    &=\int_{\upsilon\tau}^t \big[a(X(s),\La(s))-b(\La(\de(s)))X(\de(s))\big]\d s\\
    &\quad+\int_{\upsilon\tau}^t \sigma(X(s),\La(s))\d w_1(s).
  \end{align*}
  According to the condition (\textbf{H1}), it follows from this and It\^o's formula that
  \begin{align*}
    &|X(t)-X(\de(t))|^2\\
    &\quad=\int_{\de(t)}^t\!\! 2\la X(s)\!-\!X(\de(s)),a(X(s),\La(s))\!-\!b(\La(\de(s)))X(\de(s))\raa\d s\\
    &\qquad+\int_{\de(t)}^t\!\!\|\sigma(X(s),\La(s))\|_{\mathrm{HS}}^2\d s\!+\!\int_{\de(t)}^t\!\! 2\la X(s)\!-\!X(\de(s)),\sigma(X(s),\La(s))\d w_1(s)\raa.
  \end{align*}
  Taking the conditional expectation w.r.t. $\mathscr F_2$ on both sides of previous equality and using the independence of $(w_1(t))$ and $(w_2(t))$, we get
  \begin{align*}
    &\E^N[|X(t)-X(\de(t))|^2](\omega_2)\\
    &\leq \E^N\Big[\int_{\de(t)}^tC(\La(s))|X(s)|^2+2M_a|X(s)||X(s)-X(\de(s))|\d s\Big](\omega_2)\\
    &\quad+\E^N\Big[\int_{\de(t)}^t 2b(\La(\de(s)))|X(\de(s))||X(s)-X(\de(s))|\d s\Big](\omega_2)\\
    &\leq \E^N\Big[\int_{\de(t)}^t\big\{2\bar C(|X(s)-X(\de(s))|^2+|X(\de(s))|^2)\\
    &\quad +M_a(3|X(s)\!-\!X(\de(s))|^2\!+\!|X(\de(s))|^2)\\
    &\quad +\bar b|X(s)-X(\de(s))|^2+\bar b |X(\de(s))|^2 \big\}\d s\Big](\omega_2)\\
    &\leq (2\bar C+M_a+\bar b)\tau \E^N[|X(\de(t))|^2](\omega_2)\\
    &\quad +\E^N\Big[\int_{\de(t)}^t\!\!(2\bar C+3M_a+\bar b)|X(s)\!-\!X(\de(s))|^2\d s\Big](\omega_2).
  \end{align*}
  By virtue of Gronwall's inequality,
  \begin{align*}
    &\E^N|X(t)-X(\de(t))|^2(\omega_2)\\
    &\leq (2\bar C+ M_a+\bar b)\tau\E^N[|X(\de (t))|^2](\omega_2)\e^{\int_{\de(t)}^t\big(2\bar C+3M_a+\bar b\big)\d s}\\
    &\leq 2\tau(2\bar C+M_a+\bar b)\E^N[|X(t)-X(\de (t))|^2+|X(t)|^2](\omega_2)\e^{\int_{\de(t)}^t\big(2\bar C+3M_a+\bar b\big)\d s},
  \end{align*}
  which yields immediately the desired conclusion.
\end{proof}

In the following, we consider only the case $\S=\{1,2\}$ and use Lemma \ref{t-2.2} to construct the control Markov chains from upper and below. For the case $\S$ containing more than two states, we can use Lemma \ref{OPC3} and Lemma \ref{OPC5} to construct the control Markov chains, then follow the same line as the case $\S=\{1,2\}$ to derive the corresponding results.

Recall that $(\bar{q}_{ij})$ and $(q^\ast_{ij})$ are defined by \eqref{q-up} and \eqref{q-low}. Suppose that \eqref{2.6}, \eqref{2.7} and $\bar q_{21}$, $q_{12}^\ast>0$ hold.
Then, according to Lemma \ref{t-2.2}, it holds
\begin{equation}\label{comp}
\La^\ast(t)\leq \La(t)\leq \bar\La(t),\quad t\geq0, \quad \hbox{a.s.}
\end{equation}
Define
\begin{align*}
  \bar P_{ij}&=\p(\bar \La(\tau)=j|\bar \La(0)=i)\\
  P^\ast_{ij}&=\p(\La^\ast(\tau)=j|\La^\ast(0)=i),\qquad i,\,j\in \S.
\end{align*}
For a function $b(\cdot): \S\ra \R$, define
\[\bar P(b)=\big(\e^{b(i)}\bar P_{ij}\big),\quad P^\ast(b)=\big(\e^{b(i)}P^\ast_{ij}\big).\]
Then the corresponding first eigenvalues of the linear operators $\bar P_b$, $P_b^\ast$ are denoted by
\begin{equation}\label{eign}
\begin{split}
  \bar\lambda_{1}(b)&=\max\{\mathrm{Re}(\lambda);\lambda\in \mathrm{Spec}(\bar P_b)\},\\
  \lambda_{1}^\ast(b)&=\max\{\mathrm{Re}(\lambda);\lambda\in \mathrm{Spec}(P_b^\ast)\}.
\end{split}
\end{equation}

The Markov chain $(\bar\La(n\tau))_{n\geq 0}$ is a skeleton Markov chain of $(\bar \La(t))$. If we denote $\bar f_{ij}(t)=\p(\bar \La(t)=j|\bar\La(0)=i)$, $i,\,j\in\S$, $t\geq 0$, then
\[\bar P_{ij}=\p(\bar \La(\tau)=j|\bar \La(0)=i)=\bar f_{ij}(\tau).\]
It is known (cf. e.g. \cite[Chapter 4]{C2004}) that $\bar f_{ij}(t)$ satisfies the following equation
\begin{equation}\label{dens-1}
\bar f_{ij}(t)=\e^{\bar q_i t} \delta_{ij}+\sum_{k\neq j}\int_0^t \bar f_{ik}(t-s)\bar q_{kj}\e^{-\bar q_js}\d s,
\end{equation}
where $\delta_{ij}=1$ if $i=j$; otherwise, $\delta_{ij}=0$. Since $(\bar \La(t))$ is assumed to be irreducible, this yields that $\bar P_{ij}>0$ for all $i,\,j\in\S$, which means that the transition matrix $\bar P$ of $(\bar\La(n\tau))_{n\geq 0}$ is positive. Therefore, Lemma \ref{t-2.1} can be applied to $(\bar\La(n\tau))_{n\geq 0}$. Similar deduction holds for $(\La^\ast(n\tau))_{n\geq 0}$.

\begin{lem}\label{t3.2}
There exist two   constants $\wt K_1$, $\wt K_2>0$ such that for any initial point $i_0\in\S$ of $\bar \La$ and $\La^\ast$, it holds that
\begin{gather}\label{3.5}
  \wt K_1 (\bar \lambda_{1}(b))^t\leq \E_{i_0}\Big[\exp\Big\{\int_0^tb(\bar \La(\de(s)))\d s\Big\}\Big]\leq \wt K_2(\bar \lambda_{1}(b))^t,\\
  \wt K_1 (\lambda_{1}^\ast(b))^t\leq \E_{i_0}\Big[\exp\Big\{\int_0^tb(\La^\ast(\de(s)))\d s\Big\}\Big]\leq \wt K_2(\lambda_{1}^\ast(b))^t,
\end{gather} for $t$ large enough.
\end{lem}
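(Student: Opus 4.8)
The plan is to exploit the piecewise-constant structure of $s\mapsto\bar\La(\de(s))$ to turn the continuous-time exponential functional into a discrete-time one driven by the skeleton chain $(\bar\La(n\tau))_{n\ge0}$, and then to invoke Lemma~\ref{t-2.1}. The two displays are established in exactly the same way, so I would treat only the assertion for $\bar\La$; the one for $\La^\ast$ follows verbatim with $P^\ast$ and $\lambda_1^\ast(b)$ in place of $\bar P$ and $\bar\lambda_1(b)$.

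Fix $t$ large and put $N=[t/\tau]$. Since $\de(s)=n\tau$ for $s\in[n\tau,(n+1)\tau)$, the integrand is constant on each such subinterval, whence
\[
\int_0^t b(\bar\La(\de(s)))\,\d s=\tau\sum_{n=0}^{N-1}b(\bar\La(n\tau))+(t-N\tau)\,b(\bar\La(N\tau)).
\]
The final term has modulus at most $\tau\max_{i\in\S}|b(i)|$ because $0\le t-N\tau<\tau$, so its exponential lies between two fixed positive constants depending only on $\tau$ and $b$; this factor is harmless and can be absorbed into $\wt K_1,\wt K_2$. It therefore suffices to estimate $\E_{i_0}[\exp\{\tau\sum_{n=0}^{N-1}b(Y_n)\}]$, where $Y_n:=\bar\La(n\tau)$.

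As recorded in the paragraph preceding the lemma, $(Y_n)$ is a time-homogeneous Markov chain whose transition matrix $\bar P$ is \emph{strictly positive} (irreducibility of $\bar\La$ together with \eqref{dens-1}), so Lemma~\ref{t-2.1} applies directly with the tilt $\theta(i)=\tau b(i)$, i.e. to the tilted skeleton matrix $(\e^{\tau b(i)}\bar P_{ij})$ whose Perron root is the rate $\bar\lambda_1(b)$ of \eqref{eign}. The lemma then yields constants $K_3,K_4>0$, \emph{independent of $i_0$} (it is stated uniformly in the initial distribution), such that for $N$ large
\[
K_3\,(\bar\lambda_1(b))^{N}\le\E_{i_0}\Big[\exp\Big\{\tau\textstyle\sum_{n=0}^{N-1}b(Y_n)\Big\}\Big]\le K_4\,(\bar\lambda_1(b))^{N}.
\]
Finally I would pass from the skeleton count $N=[t/\tau]$ back to continuous time: since $N\in(t/\tau-1,\,t/\tau]$, the powers indexed by $N$ and by the continuous parameter differ only by a factor lying in a fixed compact subinterval of $(0,\infty)$, which recovers the stated form $(\bar\lambda_1(b))^{t}$ after absorbing the bounded discrepancy into the constants. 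Choosing $\wt K_1,\wt K_2$ to serve both $\bar\La$ and $\La^\ast$ (take the smaller of the two lower constants and the larger of the two upper constants) completes the argument.

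The step demanding the most care is the scale bookkeeping rather than any deep estimate: the factor $\tau$ standing in front of the Riemann-type sum, the precise matching of the tilt $\theta=\tau b$ with the tilted skeleton matrix whose Perron root is the rate in \eqref{eign}, and the conversion between the integer count $N=[t/\tau]$ and the real time $t$. Each of these introduces only a bounded multiplicative error, but one must check that these errors are genuinely uniform in the initial state $i_0$; this uniformity is exactly what Lemma~\ref{t-2.1} supplies, its constants $K_3,K_4$ being independent of the initial distribution $\mu$.
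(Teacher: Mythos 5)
Your proposal is correct and follows essentially the same route as the paper's own proof: both rewrite $\int_0^t b(\bar\La(\de(s)))\,\d s$ as $\tau\sum_{k=0}^{[t/\tau]-1}b(\bar\La(k\tau))$ plus a boundary term whose exponential is absorbed into the constants, apply Lemma~\ref{t-2.1} to the strictly positive skeleton transition matrix (positivity coming from irreducibility via \eqref{dens-1}), and pass between the discrete index $[t/\tau]$ and the continuous time $t$ at the cost of a bounded multiplicative factor, with uniformity in $i_0$ supplied by Lemma~\ref{t-2.1} and the finiteness of $\S$. If anything, your handling of the leftover piece is slightly more careful than the paper's display, which omits the factor $b(\bar\La([t/\tau]\tau))$ multiplying $(t-[t/\tau]\tau)$.
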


\begin{proof}
We only prove \eqref{3.5}, and the rest assertion can be proved in the same way.
Applying Lemma \ref{t-2.1}, we obtain that for $t$ large enough,
\begin{align*}
  \E_{i_0}\Big[\exp\Big\{\int_0^t b(\bar \La(\de (s)))\d s\Big\}\Big]
  &=\E_{i_0}\Big[\exp\Big\{\sum_{k=0}^{[t/\tau]-1}\big(b(\bar \La(k\tau))\tau+(t-[t/\tau]\tau)\big)\Big\}\Big]\\
  &\leq \e^{\tau\max_{i}b(i)}\E_{i_0}
  \Big[\exp\Big\{\sum_{k=0}^{[t/\tau]-1}b(\bar\La(k\tau))\tau\Big\}\Big]\\
  &\leq \e^{\tau\max_{i}b(i)} K_2\big(\min\{\bar \La_{1,b},1\}\big)^{-\tau}(\lambda_{1}(b))^t.
\end{align*}
Analogously,
\begin{align*}
  \E_{i_0}\Big[\exp\Big\{\int_0^t b(\bar \La(\de (s)))\d s\Big\}\Big]
  &\geq \e^{\tau(\min_{i}b(i)\wedge 0)}K_1\big(\max\{\bar \lambda_{1,b},1\})^{-\tau}(\bar\lambda_{1}(b))^t.
\end{align*}
Inequality \eqref{3.5} follows from the finiteness of the cardinality of $\S$.
\end{proof}

\begin{lem}\label{t3.3}
Under the conditions (\textbf{H1})-(\textbf{H3}), (\textbf{Q1}), (\textbf{Q2}) and (\textbf{Q3}), suppose further that \eqref{2.6}, \eqref{2.7} and $\bar q_{21}$, $q_{12}^\ast>0$ hold. Assume the functions $b(\cdot)$, $C(\cdot)$, $c(\cdot)$ on $\S$ are all non-decreasing. Then for $\veps\in(0,1)$,
\begin{equation}\label{3.8}
\E[|X(t)|^2]\leq |x_0|^2\E\Big[\e^{\int_0^t (C(\bar\La(r))\!-\!2b(\La^\ast(\de(r)))
    \!+\!2\sqrt{\frac{K(\tau)}{1-K(\tau)}}b(\bar\La(\de(r))))\d r}\Big]
\end{equation}
and
\begin{equation}\label{3.9}
\E[|X(t)|^2]\geq |x_0|^2\E\Big[\e^{\int_0^t
c(\La^\ast(r))\!-\!2b(\bar\La(\de(r)))
\!-\!2\sqrt{\frac{K(\tau)}{1-K(\tau)}}b(\bar\La(\de(r)))\d r}\Big].
\end{equation}
\end{lem}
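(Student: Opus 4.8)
The plan is to derive a closed Gronwall inequality for the conditional second moment $\phi(t):=\E^N[|X(t)|^2](\omega_2)$, integrate it pathwise in $\omega_2$, and then take the outer expectation $\E$. First I would apply It\^o's formula to $|X(t)|^2$ along \eqref{1.1a}, obtaining
\[
\d|X(t)|^2=\Big[2\la X(t),a(X(t),\La(t))\raa+\hs{\sigma(X(t),\La(t))}^2-2b(\La(\de(t)))\la X(t),X(\de(t))\raa\Big]\d t+\d M(t),
\]
with $\d M(t)=2\la X(t),\sigma(X(t),\La(t))\,\d W(t)\raa$. Conditioning on $\mathscr F_2$ and invoking, exactly as in the proof of Lemma~\ref{t3.1}, the independence of the Wiener process from $\mathscr F_2$ to annihilate the stochastic integral, I get $\phi(t)=|x_0|^2+\int_0^t\E^N[D(s)](\omega_2)\,\d s$, where $D(s)$ denotes the bracketed drift.

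The decisive structural observation is that the comparison processes $\bar\La$ and $\La^\ast$ are $\mathscr F_2$-measurable, whereas $(\La(t))$ is not. So the second step is to bound $D(s)$ by quantities depending only on $\bar\La,\La^\ast$. By (\textbf{H1}) the first two terms lie between $c(\La(s))|X(s)|^2$ and $C(\La(s))|X(s)|^2$; using the comparison \eqref{comp} and the monotonicity of $C$ and $c$ I replace these by $C(\bar\La(s))|X(s)|^2$ from above and $c(\La^\ast(s))|X(s)|^2$ from below. For the control term I write $\la X(s),X(\de(s))\raa=|X(s)|^2-\la X(s),X(s)-X(\de(s))\raa$, so that the leading piece $-2b(\La(\de(s)))|X(s)|^2$ is bounded, via monotonicity of $b$ and \eqref{comp}, by $-2b(\La^\ast(\de(s)))|X(s)|^2$ from above and $-2b(\bar\La(\de(s)))|X(s)|^2$ from below. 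Because all these envelope coefficients are $\mathscr F_2$-measurable, they factor out of $\E^N$.

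The remaining piece $2b(\La(\de(s)))\la X(s),X(s)-X(\de(s))\raa$ is where Lemma~\ref{t3.1} enters. I would bound it by $2|b(\La(\de(s)))|\,|X(s)|\,|X(s)-X(\de(s))|$, replace $|b(\La(\de(s)))|$ by the envelope value $b(\bar\La(\de(s)))$, and then apply the conditional Cauchy--Schwarz inequality together with \eqref{3.2}:
\[
\E^N\big[|X(s)|\,|X(s)-X(\de(s))|\big](\omega_2)\le \sqrt{\phi(s)}\,\sqrt{\frac{K(\tau)}{1-K(\tau)}\,\phi(s)}=\sqrt{\frac{K(\tau)}{1-K(\tau)}}\,\phi(s),
\]
yielding a contribution $2\sqrt{K(\tau)/(1-K(\tau))}\,b(\bar\La(\de(s)))\,\phi(s)$. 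Equivalently one may split this inner product by Young's inequality with parameter $\veps\in(0,1)$ and afterwards take $\veps=\sqrt{K(\tau)/(1-K(\tau))}$, which is precisely what produces the constant appearing in the statement.

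Collecting the estimates, $\phi$ satisfies
\[
\phi(t)\le |x_0|^2+\int_0^t\Big[C(\bar\La(s))-2b(\La^\ast(\de(s)))+2\sqrt{\frac{K(\tau)}{1-K(\tau)}}\,b(\bar\La(\de(s)))\Big]\phi(s)\,\d s,
\]
together with the reverse inequality featuring $c(\La^\ast(s))$ and $-2b(\bar\La(\de(s)))$. Since $\S$ is finite, all coefficients are bounded and $\mathscr F_2$-measurable, so Gronwall's inequality and its lower-bound analogue apply pathwise in $\omega_2$ and give the exponential bounds with an $\mathscr F_2$-measurable exponent; taking $\E$ and using $\E[\E^N[\,\cdot\,]]=\E[\,\cdot\,]$ delivers \eqref{3.8} and \eqref{3.9}. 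The main obstacle is exactly the non-$\mathscr F_2$-measurability of $(\La(t))$: the coefficients $C(\La(s))$ and $b(\La(\de(s)))$ cannot be pulled out of $\E^N$ directly, and the entire scheme hinges on first replacing them by the envelopes $\bar\La,\La^\ast$ via \eqref{comp} and the monotonicity of $C,c,b$, and only then conditioning. The one delicate estimate is the control of the cross term when $b$ takes negative values, where the replacement $|b(\La(\de(s)))|\le b(\bar\La(\de(s)))$ must be justified from the monotonicity structure rather than taken for granted.
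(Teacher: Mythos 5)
Your proposal is correct and follows essentially the same route as the paper: It\^o's formula, conditioning on $\mathscr F_2$, replacing $\La$ by the $\mathscr F_2$-measurable envelopes $\bar\La,\La^\ast$ via \eqref{comp} and monotonicity, invoking Lemma \ref{t3.1} for the increment, and closing with a pathwise Gronwall argument before taking the outer expectation. Your use of conditional Cauchy--Schwarz on the cross term is just a streamlined version of the paper's Young-inequality split with the optimized choice of $\veps$ (which, after relaxing $\sqrt{b(\bar\La)b(\La^\ast)}\le b(\bar\La)$, yields the identical coefficient $2\sqrt{K(\tau)/(1-K(\tau))}\,b(\bar\La(\de(r)))$), and you correctly flag the nonnegativity of $b$ needed there, a point the paper's proof also uses implicitly.
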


\begin{proof}
  It follows from It\^o's formula and condition (\textbf{H1}) that
  \begin{align*}
    \d |X(t)|^2
    &=\big(2\la X(t),a(X(t),\La(t))\raa+
    \|\sigma(X(t),\La(t))\|_{\mathrm{HS}}^2\big)\d t\\
    &\quad-2b(\La(\de(t)))\la X(t),X(\de(t))\raa \d t+
    2\la X(t),\sg(X(t),\La(t))\d w_1(t)\raa\\
    &\leq\big(C(\La(t))|X(t)|^2-2b(\La(\de(t)))
    \la X(t),X(\de(t))\raa\big)\d t\\
    &\quad+
    2\la X(t),\sg(X(t),\La(t))\d w_1(t)\raa\\
    &\leq\big(C(\La(t))-(2-\veps)b(\La(\de(t)))\big)|X(t)|^2\d t\\
    &\quad+\frac 1\veps b(\La(\de(t)))|X(t)-X(\de(t))|^2\d t\\
    &\quad+2\la X(t),\sigma(X(t),\La(t))\d w_1(t)\raa, \, \, \hbox{for every} \, \, \veps\in(0,1).
  \end{align*}
  Taking conditional expectation $\E^N[\,\cdot\,]$ on both sides, then applying \eqref{comp} and the non-decreasing property of functions $b(\cdot)$ and $C(\cdot)$, we obtain that for $0\leq s<t$,
  \begin{align*}
    &\E^N[|X(t)|^2](\omega_2)-\E^N[|X(s)|^2](\omega_2)\\
    &\leq \E^N\Big[\int_s^t\Big\{ \big(C(\La(r))-(2-\veps)b(\La(\de(r)))\big)|X(r)|^2\\
     &\qquad\qquad+\frac 1\veps b(\La(\de(r)))|X(r)-X(\de(r))|^2\Big\}\d r\Big](\omega_2)\\
     &\leq \E^N\Big[\int_s^t\Big\{ \big(C(\bar\La(r))-(2-\veps)b(\La^\ast(\de(r)))\big)|X(r)|^2\\
     &\qquad\qquad+\frac1 \veps b(\bar\La(\de(r)))|X(r)-X(\de(r))|^2\Big\}\d r \Big](\omega_2)\\
     &=\int_s^t\Big\{\big(C(\bar\La(r))-(2-\veps)b(\La^\ast(\de(r)))\big)
     \E^N\big[|X(r)|^2\big](\omega_2)\\
     &\qquad \qquad \quad +\frac1 \veps b(\bar\La(\de(r)))\E^N\big[|X(r)-X(\de(r))|^2\big](\omega_2)\Big\}\d r,
  \end{align*} where in the last equality we used the fact that the processes $(\bar\La(t))$ and $(\La^\ast(t))$ are fixed once $\omega_2$ is given. Invoking the estimate in Lemma \ref{t3.1}, this yields
  \begin{equation}\label{3.10-1}
  \begin{split}
    &\E^N[|X(t)|^2](\omega_2)-\E^N[|X(s)|^2](\omega_2)\\
    &\leq \int_s^t\Big\{ \
    \Big(C(\bar\La(r))\!-\!(2-\veps)b(\La^\ast(\de(r)))
    \!+\!\frac{K(\tau)}{\veps (1-K(\tau))}b(\bar\La(\de(r)))\Big)
    \E^N[|X(r)|^2](\omega_2)\Big\}\d r.
  \end{split}
  \end{equation} Note that as a function of $\veps$,
  $$C(\bar\La(r))\!-\!(2-\veps)b(\La^\ast(\de(r)))
    \!+\!\frac{K(\tau)}{\veps (1-K(\tau))}b(\bar\La(\de(r)))$$
    takes its maximal value at
  $$\veps =\sqrt{\frac{K(\tau)}{1-K(\tau)}} \cdot \sqrt{\frac{b(\bar\La(\de(r)))}{b(\La^\ast(\de(r)))}}.$$
Hence, it follows from \eqref{3.10-1} that
  \begin{align*}
    &\E^N[|X(t)|^2](\omega_2)-\E^N[|X(s)|^2](\omega_2)\\
    &\leq \!\int_s^t\!\Big\{
    \Big(C(\bar\La(r))\!-\!2b(\La^\ast(\de(r)))
    \!+\!2\sqrt{\frac{K(\tau)}{1-K(\tau)}}
    \sqrt{b(\bar\La(\de(r)))b(\La^\ast(\de(r)))}\Big)
    \E^N[|X(r)|^2](\omega_2)\Big\}\d r.
 \end{align*}
  Since $b(\La^\ast(\de(r)))\leq b(\bar\La(\de(r)))$ for all $r\ge 0$, we then have that
  \begin{equation}\label{3.10}
  \begin{split}
    &\E^N[|X(t)|^2](\omega_2)-\E^N[|X(s)|^2](\omega_2)\\
    &\leq \int_s^t\Big\{ \
    \Big(C(\bar\La(r))\!-\!2b(\La^\ast(\de(r)))
    \!+\!2\sqrt{\frac{K(\tau)}{1-K(\tau)}}b(\bar\La(\de(r)))\Big)
    \E^N[|X(r)|^2](\omega_2)\Big\}\d r.
  \end{split}
  \end{equation}
  Set $u(t)(\omega_2)=\E^N[|X(t)|^2](\omega_2)$, and
  \[g(r)=C(\bar\La(r))\!-\!2b(\La^\ast(\de(r)))
    \!+\!2\sqrt{\frac{K(\tau)}{1-K(\tau)}}b(\bar\La(\de(r))).\]
  \eqref{3.10} can be rewritten in the form
  \begin{equation}\label{3.11}
  u(t)(\omega_2)-u(s)(\omega_2)\leq \int_s^t g(r)u(r)(\omega_2)\d r.
  \end{equation}
  As the function $r\ra g(r)$ needs not to be differentiable, we can use the trick as in \cite[Lemma 3.2]{Shao17} to derive from Gronwall's inequality that
  \begin{equation}\label{3.12}
  u(t)(\omega_2)\leq u(0)(\omega_2)\e^{\int_0^t g(r)\d r}.
  \end{equation}
  Then we get the desired upper estimate \eqref{3.8} after taking expectation w.r.t. $\E[\cdot]$ on both sides of \eqref{3.12}.

  The lower estimate \eqref{3.9} can be deduced by the same method. Actually, it follows from It\^o's formula and condition (\textbf{H1}) that
  \begin{align*}
    \d |X(t)|^2
    &=\big(2\la X(t),a(X(t),\La(t))\raa+
    \|\sigma(X(t),\La(t))\|_{\mathrm{HS}}^2\big)\d t\\
    &\quad-2b(\La(\de(t)))\la X(t),X(\de(t))\raa \d t+
    2\la X(t),\sg(X(t),\La(t))\d w_1(t)\raa\\
    &\ge\big(c(\La(t))-(2+\veps)b(\La(\de(t)))\big)|X(t)|^2\d t\\
    &\quad-\frac 1\veps b(\La(\de(t)))|X(t)-X(\de(t))|^2\d t\\
    &\quad+2\la X(t),\sigma(X(t),\La(t))\d w_1(t)\raa,\, \, \hbox{for every} \, \, \veps\in(0,1).
  \end{align*} In what follows, the difference is that we shall use the transform $c(\La(r))\geq c(\La^\ast(r))$ instead of $C(\La(r))\leq C(\bar\La(r))$ and corresponding transform for $b(\La(r))$ in this case. However, these details are omitted.
\end{proof}

In order to estimate the long time behavior of the quantity
\[\E\e^{p\!\int_0^t C(\bar\La(r))\d r},\qquad p>0,\]
we present the estimate established in \cite{Bar} after introducing some necessary notation.
Let
\[\bar Q_p=\bar Q+p\mathrm{diag}(C(1),\ldots,C(M)),\]
where $\mathrm{diag}(C(1),\ldots,C(M))$ denotes the diagonal matrix generated by  vector $(C(1), \ldots, C(M))$. Put
\begin{equation}\label{3.13}
\eta_{p,C}=-\max\{\mathrm{Re}(\gamma);\,\gamma\in \mathrm{Spec}(\bar Q_p)\}.
\end{equation}
According to \cite[Proposition 4.1]{Bar}, for any $p>0$, there exist two positive constants $\kappa_1(p)$, $\kappa_2(p)$ such that for any initial point $i_0\in\S$,
\begin{equation}\label{3.14}
\kappa_1(p)\e^{-\eta_{p,C}t}\leq \E_{i_0}\Big[\e^{p\!\int_0^t C(\bar\La(r))\d r}\Big]\leq \kappa_2(p)\e^{-\eta_{p,C}t},\quad t>0.
\end{equation}

Now we formulate our main result.

\begin{thm}\label{t3.4}
Suppose the conditions in Lemma \ref{t3.3} hold. In addition, assume
\begin{equation}\label{3.15}
\eta_{3,C}>0,\ \ \lambda_{1}^{\ast}(-6b)<1, \ \ \text{and}\ \ \bar\lambda_{1}\Big(6\sqrt{\frac{K(\tau)}{1-K(\tau)}}b\Big)<1.
\end{equation}
Then for every initial value $ (X(0),\La(0))=(x_0,i_0)\in \R^d\times\S$ of \eqref{1.1} and \eqref{1.2},  it holds
\begin{equation}\label{3.16}
\lim_{t\ra\infty} X(t)=0,\ \ \text{a.s.}
\end{equation}
\end{thm}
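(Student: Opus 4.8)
The plan is to follow the two-stage strategy of Mao \cite{Mao15}: first establish exponential decay of the second moment $\E|X(t)|^2$, whence $\int_0^\infty\E|X(t)|^2\,\d t<\infty$, and then upgrade this to pathwise convergence via a Borel--Cantelli argument on the observation grid $\{n\tau\}$. For the first stage I would start from the upper estimate \eqref{3.8} of Lemma \ref{t3.3}, which bounds $\E|X(t)|^2$ by $|x_0|^2$ times the expectation of a single exponential whose exponent is the sum of three functionals of the coupled chains $(\bar\La(r))$ and $(\La^\ast(r))$. Since these two processes are driven by the same Poisson random measure and are therefore dependent, one cannot factorize the expectation directly; instead I would apply H\"older's inequality with the three conjugate exponents all equal to $3$, which reduces the problem to the three marginal quantities
\[\E_{i_0}\big[\e^{3\int_0^t C(\bar\La(r))\d r}\big],\quad \E_{i_0}\big[\e^{-6\int_0^t b(\La^\ast(\de(r)))\d r}\big],\quad \E_{i_0}\big[\e^{6\sqrt{\tfrac{K(\tau)}{1-K(\tau)}}\int_0^t b(\bar\La(\de(r)))\d r}\big].\]

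These three factors are precisely what the preparatory estimates control. The first is bounded by $\kappa_2(3)\e^{-\eta_{3,C}t}$ via \eqref{3.14} with $p=3$; the second by $\wt K_2(\lambda_1^\ast(-6b))^t$ and the third by $\wt K_2(\bar\lambda_1(6\sqrt{\tfrac{K(\tau)}{1-K(\tau)}}b))^t$ via Lemma \ref{t3.2} applied to the functions $-6b$ and $6\sqrt{\tfrac{K(\tau)}{1-K(\tau)}}b$ respectively. Taking cube roots and multiplying, the hypotheses \eqref{3.15} make each factor decay geometrically, so that $\E|X(t)|^2\le C\e^{-\gamma t}$ for all large $t$, with $\gamma=\tfrac13\big(\eta_{3,C}-\log\lambda_1^\ast(-6b)-\log\bar\lambda_1(6\sqrt{\tfrac{K(\tau)}{1-K(\tau)}}b)\big)>0$. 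In particular $\int_0^\infty\E|X(t)|^2\,\d t<\infty$, and the sequence $\E|X(n\tau)|^2$ is summable.

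For the second stage I would pass from the integrated moment bound to almost sure convergence on the grid $\{n\tau\}$. Applying It\^o's formula to $|X(t)|^2$ on each interval $[n\tau,(n+1)\tau]$, bounding the drift by the linear-growth conditions (\textbf{H1}) and (\textbf{H3}) (note $\de(s)=n\tau$ there, so $X(\de(s))=X(n\tau)$), and controlling the stochastic integral by the Burkholder--Davis--Gundy inequality, one obtains
\[\E\Big[\!\!\sup_{n\tau\le t\le (n+1)\tau}\!\!|X(t)|^2\Big]\le C_1\E|X(n\tau)|^2+C_2\!\int_{n\tau}^{(n+1)\tau}\!\!\E|X(s)|^2\,\d s,\]
with constants uniform in $n$. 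By the exponential decay just established, the right-hand side is summable in $n$; hence for every $\veps>0$ the Chebyshev inequality and the first Borel--Cantelli lemma give $\p\big(\sup_{n\tau\le t\le(n+1)\tau}|X(t)|^2>\veps\ \text{i.o.}\big)=0$, and letting $\veps\downarrow0$ along a sequence yields \eqref{3.16}.

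The step I expect to be the main obstacle is the H\"older decoupling together with the correct matching of exponents: because $(\bar\La)$ and $(\La^\ast)$ are not independent, the splitting into three factors with common exponent $3$ is essential, and one must verify that the resulting marginal functionals are exactly those governed by \eqref{3.14} and Lemma \ref{t3.2}, so that the three spectral conditions in \eqref{3.15} enter with the right constants $3$ and $6$. A secondary technical point is justifying the supremum moment bound on each $\tau$-interval with constants uniform in $n$, where one absorbs the $\sup|X|^2$ term arising from BDG into the left-hand side using the linear growth of $\sigma$.
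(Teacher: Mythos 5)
Your proposal is correct and follows essentially the same route as the paper: the identical H\"older decoupling with exponents $(3,3,3)$ applied to \eqref{3.8}, combined with \eqref{3.14} (at $p=3$) and Lemma \ref{t3.2} (at $-6b$ and $6\sqrt{K(\tau)/(1-K(\tau))}\,b$), reproduces the paper's key estimate \eqref{3.17} and hence $\int_0^\infty \E|X(t)|^2\,\d t<\infty$. The only divergence is in the final step, where the paper deduces $\lim_{t\to\infty}\E|X(t)|^2=0$ from uniform continuity of $t\mapsto\E|X(t)|^2$ and then cites the proof of \cite[Theorem 3.4]{Mao15}, while you instead write out the standard Burkholder--Davis--Gundy plus Borel--Cantelli completion on the intervals $[n\tau,(n+1)\tau]$ --- which is essentially the content of that citation, and your treatment of it (absorbing the supremum term from BDG, with constants uniform in $n$ thanks to the fixed interval length $\tau$ and summability from the exponential decay) is sound.
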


\begin{proof}
  Applying Lemma \ref{t3.2}, \eqref{3.14} and H\"older's inequality to the estimate \eqref{3.8}, we get
  \begin{equation}\label{3.17}
  \begin{split}
    \E[|X(t)|^2]&\leq |x_0|^2 \Big(\E\e^{3\int_0^t C(\bar\La(r))\d r}\Big)^{\frac 13}\Big(\E\e^{-6\int_0^t b(\La^\ast(\de(r)))\d r}\Big)^{\frac 13}\Big(\E\e^{6\sqrt{\frac{K(\tau)}{1-K(\tau)}}
    \int_0^t b(\bar\La(\de(r)))\d r}\Big)^{\frac 13}\\
    &\leq |x_0|^2 \big(\kappa_2(3)\wt K_2^2\big)^{\frac 13} \e^{-\frac{\eta_{3,C}t}{3}}\big( \lambda_{1}^{\ast}(-6b)\big)^{\frac t3}\Big(\bar \lambda_{1}\Big(6\sqrt{\frac{K(\tau)}{1-K(\tau)}}b
    \Big)\Big)^{\frac t3}.
  \end{split}
  \end{equation}
  Then, it is easy to check that  under the condition \eqref{3.15},
\begin{equation}\label{3.18a}
\int_0^\infty\E\big[|X(t)|^2\big]\d t<\infty,
\end{equation}
and there exists a constant $C>0$ such that
\beq{C}
\E[|X(t)|^2]\le C \quad \hbox{for all} \quad t\ge 0.
\eeq
By It\^o's formula, we obtain that
  \begin{align*}
    &\E[|X(t_2)|^2]-\E[|X(t_1)|^2]\\
    &\quad=\E\int_{t_1}^{t_2}\!\! \big(2\la X(s),a(X(s),\La(s))\!-\!b(\La(\de(s)))X(\de(s))\raa\\
    &\qquad \qquad \qquad +\|\sigma(X(s),\La(s))\|_{\mathrm{HS}}^2\big)\d s
  \end{align*} for any $0\le t_1 <t_2<\infty$. Thus, by virtue of the condition (\textbf{H1}) and \eqref{C}, there exists a generic constant $C>0$ such that
  $$\big|\E\big[|X(t_2)|^2]-\E[|X(t_1)|^2\big]\big|\le C(t_2-t_1).$$
  Namely, $\E[|X(t)|^2]$ is uniformly continuous with respect to $t$ over $\R_{+}$. Hence, it follows from \eqref{3.18a} that
  \begin{equation}\label{3.20}
  \lim_{t\ra \infty}\E\big[|X(t)|^2\big]=0.
  \end{equation}
  Now we can completely follow the proof line of \cite[Theorem 3.4]{Mao15} to show that
  \[\lim_{t\ra \infty} X(t)=0,\ \ \text{a.s.}\]
  The details are omitted.
\end{proof}
\begin{rem}
  Note that in Lemma \ref{t3.2} and Theorem \ref{t3.4}, we have assumed the non-decreasing property of the functions $b(\cdot)$, $C(\cdot)$, $c(\cdot)$ on $\S$. This is a technical assumption to simplify our presentation. Without this monotone condition, after doing some necessary rearrangement of the states of $\S$, our results remain valid. Precisely, for instance, in order to control $\exp\big(\int_0^t C(\La(s))\d s\big)$, one can first reorder the set $\S$ so that $C(\cdot)$ is non-decreasing. Of course, under this new order, the original $Q$ matrix becomes a new form $\tilde Q=(\tilde q_{ij})$, while $\tilde Q$ remains to be conservative and irreducible.  Hence, we can define the corresponding Markov chains $(\bar \La(t))$ and $(\La^\ast(t))$ associated with $\tilde Q$. Then, applying Lemmas \ref{t-2.2}, \ref{OPC3}, \ref{OPC5},  one can control  $\exp\big(\int_0^t C(\La(s))\d s\big)$ from upper and below.
\end{rem}

\section*{Appendix A: Proof of Theorem \ref{exiuni}}\label{appendix}

According to \cite{Sk89} and \cite{YZ}, $(\La(t))$ can be represented in terms of Poisson random measure. This representation will play an important role in this work.
For the sake of clarity in the presentation and calculation, we introduce the following construction of the probability space which will be used throughout this work. Let
\begin{equation*}
  \Omega^{(1)}=\{\omega|\,\omega:[0,\infty)\ra \R^d\ \text{is continuous with $\omega_0=0$}\},
\end{equation*} which is endowed with the locally uniform convergence topology and the Wiener measure $\p^{(1)}$ so that the coordinate process $W(t,\omega):=\omega(t), \ t\geq 0$ is a standard $d$-dimensional Wiener process on $(\Omega^{(1)}, {\mathcal{F}}^{(1)}, \{{\mathcal{F}}^{(1)}_{t}\}_{t\ge 0}, \, \p^{(1)})$.
Let $(\Omega^{(2)}, {\mathcal{F}}^{(2)}, \{{\mathcal{F}}^{(2)}_{t}\}_{t\ge 0}, \, \p^{(2)})$ be a
complete probability space with a filtration $\{{\mathcal{F}}^{(2)}_{t}\}_{t\ge
0}$ satisfying the usual conditions,
and let $\{\xi_{n}\}$ be a sequence of independent exponentially distributed random variables with mean $1$ on $(\Omega^{(2)}, {\mathcal{F}}^{(2)}, \{{\mathcal{F}}^{(2)}_{t}\}_{t\ge 0}, \, \p^{(2)})$. Define
\[\Omega=\Omega^{(1)}\times\Omega^{(2)},\quad {\mathcal{F}}={\mathcal{F}}^{(1)}\times {\mathcal{F}}^{(2)},\quad
{\mathcal{F}}_{t}={\mathcal{F}}^{(1)}_{t}\times {\mathcal{F}}^{(2)}_{t},\quad \p=\p^{(1)}\times\p^{(2)},\]
and  $(\Omega, {\mathcal{F}}, \{{\mathcal{F}}_{t}\}_{t\ge 0}, \, \p)$
is just the probability space used throughout this appendix. The proof of Theorem \ref{exiuni} is a little long, so we separate it into two steps.

\noindent\textbf{Step 1: Construction of solution.}\
Fix some $(x,i)\in \R^{d}\times \S$ and consider
 the following SFDE
 \begin{equation}\label{1.1i}
\d X^{(i)}(t)=\big[a(X^{(i)}(t),i)-b(X^{(i)}(\de(t)),i)\big]\d t+\sigma(X^{(i)}(t),i)\d W(t)
\end{equation} with $X^{(i)}(0)=x$. By virtue of conditions (\textbf{H1}) and (\textbf{H2}), we can prove that equation \eqref{1.1i} admits a unique nonexplosive solution $X^{(i)}(t)$ by using the Picard iterations following the line of \cite[Chapter 5, Theorem 2.2]{M1997}. Then, we have
\[\p\big(\lim_{m\to\infty}\tau_{O_{m}}=\infty\big)=1,\] where $\tau_{O_{m}}:=\inf\{t\ge 0: |X^{(i)}(t)|\ge m\}$ for $m\ge 1$.
Recall that $\{\xi_{n}\}$ is a sequence of independent mean $1$ exponentially distributed random variables introduced above. Let \begin{equation}
\label{eq-tau1}
 \tau_{1}= \theta_{1}: = \inf\Big\{t\ge 0: \int_{0}^{t} q_{i}(X^{(i)}(s))\d s > \xi_{1}\Big\},
\end{equation} so we have \begin{equation}\label{eq1-sw-mechanism}
\p\big(\tau_{1} > t| \mathcal{F}^{(1)}_{t}\big) = \p\Big(\xi_{1} \ge  \int_{0}^{t} q_{i}(X^{(i)}(s))\d s\Big| \mathcal{F}^{(1)}_{t}\Big)= \exp\Big\{-\int_{0}^{t} q_{i}(X^{(i)}(s))\d s\Big\}.
\end{equation} Then, it follows from condition (\textbf{Q2'}) that for some $m\ge |x|+1$, \beq{tau1zero1}\begin{array}{ll} \p\big(\tau_{1}>t\big)\ad
=\E \exp\Big\{-\int_{0}^{t} q_{i}(X^{(i)}(s))\d s\Big\}\\
\ad \ge \E\Big(\one_{\{\tau_{O_{m}}\ge t\}} \exp\Big\{-\int_{0}^{t} q_{i}(X^{(i)}(s))\d s\Big\}\Big)\\
\ad =\E\Big(\one_{\{\tau_{O_{m}}\ge t\}} \exp\Big\{-\int_{0}^{t\wedge \tau_{O_{m}}} q_{i}(X^{(i)}(s))\d s\Big\}\Big)\\
\ad \ge \p\big(\{\tau_{O_{m}}\ge t\}\big) \exp\Big\{-K_{0}\big(1+m^{\kappa_{0}}\big)t\Big\}.
\end{array}\eeq Since both terms of the product on the last line tend to $1$ as $t\downarrow 0$, one gets $\p\big(\tau_{1}>0\big)=1$. We define a process $(X,\La) \in \R^{d}\times \S$ on $[0, \tau_{1}]$ as follows:
\begin{equation}\label{step-1}
 X(t) = X^{(i)}(t) \text{ for all } t \in [0, \tau_{1}], \text{ and }\La(t)  =    i   \text{ for all } t \in [0, \tau_{1}).
\end{equation}
   Moreover, we define $\La(\tau_{1})\in \S$ according to the probability distribution: \begin{equation}\label{la-1}
 \p\big(\La(\tau_{1}) = j| {\mathcal{F}}_{\tau_{1}-}\big) = \dfrac{q_{ij}(X(\tau_{1}))}{q_{i}(X(\tau_{1}))} (1- \delta_{ij}) \one_{\{q_{i}(X(\tau_{1})) >0\}} + \delta_{ij} \one_{\{q_{i}(X(\tau_{1})) =0\}}.
 \end{equation}

Obviously, the process $(X,\La)$ has been constructed on the temporal interval $[0,\tau_{1}]$, so the process $(X,\La)$ is well-defined at the observation time $\de(t)$ when $\de(t)\le \tau_{1}$. Next, we construct the process $(X,\La)$ after $\tau_{1}$. To do so, when $\de(t)<\tau_{1}$, let $\wdh{X}$ satisfy
\beq{hatXbeforetau1}
\d \wdh{X}(t)
=\big[a(\wdh{X}(t),\La(\tau_{1}))-b(X^{(i)}(\de(t+\tau_{1})),i)\big]\d t+\sigma(\wdh{X}(t),\La(\tau_{1}))\d \wdt{W}(t),
\eeq with $\wdh{X}(0)=X(\tau_{1})$; when $\de(t)\ge\tau_{1}$, let $\wdh{X}$ satisfy
\beq{hatXaftertau1}
\d \wdh{X}(t)
=\big[a(\wdh{X}(t),\La(\tau_{1}))-b(\wdh{X}(\de(t)),\La(\tau_{1}))\big]\d t+\sigma(\wdh{X}(t),\La(\tau_{1}))\d \wdt{W}(t)
\eeq with $\wdh{X}(0)=X(\tau_{1})$, where $\wdt{W}(t)=W(t+\tau_{1})-W(\tau_{1})$. Actually, we can combining the above two equations \eqref{hatXbeforetau1} and \eqref{hatXaftertau1} as the following SFDE:
\beq{hatX}
\d \wdh{X}(t)
=\big[a(\wdh{X}(t),\La(\tau_{1}))-b(\wdh{X}(\de(t)),\wdh{\La}(\de(t)))\big]\d t+\sigma(\wdh{X}(t),\La(\tau_{1}))\d \wdt{W}(t),
\eeq with $\wdh{X}(0)=X(\tau_{1})$, where
\beq{wdhXde(t)}\wdh{X}(\de(t))=X^{(i)}(\de(t+\tau_{1}))\one_{\{0\le \de(t)<\tau_{1}\}}+\wdh{X}(\de(t))\one_{\{\de(t)\ge \tau_{1}\}},\eeq
\beq{wdhLade(t)}\wdh{\La}(\de(t))=i\one_{\{0\le \de(t)<\tau_{1}\}}+\La(\tau_{1})\one_{\{\de(t)\ge \tau_{1}\}},\eeq and here, $X^{(i)}$ is the unique solution to equation \eqref{1.1i} and so $X^{(i)}(\de(t+\tau_{1}))$ is well-defined.

Clearly, it is easy to see from \eqref{wdhXde(t)} and \eqref{wdhLade(t)} that $\wdh{X}(\de(t))$ is well-defined whenever $0\le \de(t)<\tau_{1}$, while $\wdh{\La}(\de(t))$ is well-defined for both $\de(t)<\tau_{1}$ and $\de(t)\ge\tau_{1}$. Therefore, equation \eqref{hatX} has a unique nonexplosive solution $\wdh{X}(t)$ thanks to conditions (\textbf{H1}) and (\textbf{H2}). Let \begin{equation}
 \label{eq-theta-2}
\theta_{2}:=\inf\Big\{t\ge 0: \int_{0}^{t} q_{\La(\tau_{1})}(\wdh{X}(s))\d s > \xi_{2}\biggr\}. \end{equation} As argued in \eqref{eq1-sw-mechanism}, we have
\begin{equation*}\label{eq-theta-2-distribution} \begin{aligned}
\p\big(\theta_{2} > t| {\mathcal{F}}_{\tau_{1}+t}\big) & = \p\Big(\xi_{2} \ge \int_{0}^{t} q_{\La(\tau_{1})}(\wdh{X}(s))\d s\Big|\F_{\tau_{1}+t}\Big)  \\ & = \exp\Big\{-\int_{0}^{t} q_{\La(\tau_{1})}(\wdh{X}(s))\d s\Big\}.
\end{aligned}\end{equation*}
Once again, we can derive from condition (\textbf{Q2'}) that $\p\big(\theta_{2} >0\big)=1$. Then we let \begin{equation}
\label{eq-tau-2}
 \tau_{2} : = \tau_{1} + \theta_{2}= \theta_{1} + \theta_{2}
\end{equation}  and define $(X,\La)$ on $[\tau_{1}, \tau_{2}]$ by
\begin{align}\label{step-2}
X(t)=\wdh{X}(t-\tau_{1})  \text{ for } t\in  [\tau_{1}, \tau_{2}], \ \ \La(t) = \La(\tau_{1})  \text{ for } t\in  [\tau_{1}, \tau_{2}), \
\end{align}
and
\begin{equation}\label{la-2}\begin{aligned}  \p& \big(\La(\tau_{2}) = l| {\mathcal{F}}_{\tau_{2}-}\big)  \\  & = \dfrac{q_{\La(\tau_1),l}(X(\tau_{2}))}{q_{\La(\tau_1)}(X(\tau_{2}))} (1- \delta_{\La(\tau_1),l})  \one_{\{q_{\La(\tau_1)}(X(\tau_{2})) > 0  \}} + \delta_{\La(\tau_1),l} \one_{\{q_{\La(\tau_1)}(X(\tau_{2})) = 0  \}}.
\end{aligned} \end{equation}

Following this procedure, we can further define $(X,\La)$ on the interval $[\tau_{n}, \tau_{n+1})$ inductively for $n\geq 3$, where $\tau_{n}$ is defined similarly as \eqref{eq-tau1} and \eqref{eq-tau-2}. This ``interlacing procedure'' uniquely determines a process $(X,\La)\in \R^{d}\times \S$ for all $t \in[0,\tau_{\infty})$, where
 \begin{equation}
\label{eq-tau-infty-defn}
 \tau_{\infty}=\lim_{n\to \infty}\tau_n.
\end{equation} Since the sequence $\tau_{n}$ is strictly increasing, the limit $\tau_{\infty} \le \infty$ exists. Hence, the process $(X,\La)$ constructed above can be regarded as the unique solution to SFDE \eqref{1.1b} and \eqref{1.2} on $[0,\tau_{\infty})$.

\noindent\textbf{Step 2: Nonexplosion of solution.}\
What is left to complete the proof of Theorem \ref{exiuni} is to show $\p(\tau_\infty=\infty)=1$, which is also the most delicate and difficult part of the argument.

\emph{First}, we show that the evolution of
the discrete component $\Lambda$ can be represented as a stochastic
integral with respect to a Poisson random measure, which is sometimes called Skorokhod's representation of $\Lambda$. In view of \cite[Section II-2.1]{Sk89} or \cite[Section 2.2]{YZ}, for each $x\in \R^d$, construct a family of intervals $\{\Gamma_{ij}(x);\ i,j\in \S\}$  on the positive half line in the following manner:
\begin{align*}
  \Gamma_{12}(x)&=[0,q_{12}(x))\\
  \Gamma_{13}(x)&=[q_{12}(x),q_{12}(x)+q_{13}(x))\\
  &\ldots\\
  \Gamma_{1M}(x)&=\Big[\sum_{j=2}^{M-1}q_{1j}(x), q_{1}(x)\Big)\\
  \Gamma_{21}(x)&=[q_1(x),q_1(x)+q_{21}(x))\\
  \Gamma_{23}(x)&=[q_1(x)+q_{21}(x),q_1(x)+q_{21}(x)+q_{23}(x))\\
  &\ldots
\end{align*}
and so on. Therefore, we obtain a sequence of consecutive, left-closed, right-open intervals $\Gamma_{ij}(x)$, each having length $q_{ij}(x)$. For convenience of notation, we set $\Gamma_{ii}(x)=\emptyset$ and $\Gamma_{ij}(x)=\emptyset$ if $q_{ij}(x)=0$.
Define a function $h:\R^d\times \S\times \R_{+}\ra \R$ by
\[h(x,i,z)=\sum_{j\in \S} (j-i)\mathbf 1_{\Gamma_{ij}(x)}(z).\]
Namely, for each $x\in \R^{d}$ and $i\in \S$, we set $h(x,i,z)=j-i$ if $z \in\Gamma_{ij}(z)$ for some $j\neq i$; otherwise $h(x,i,z)=0$.

Put $\lambda(t) : = \int_{0}^{t}q_{\La(s)} (X(s))\d s$ and
$n(t): = \max\{n \in \mathbb N: \xi_{1} + \dots+ \xi_{n} \le \lambda(t) \}$
for all $t \in [0, \tau_{\infty})$, where $\{ \xi_{n}, n=1,2,\dots\}$
is the sequence of independent exponential random variables with mean 1 introduced above.
Then in view of \eqref{eq-tau1}, \eqref{eq1-sw-mechanism},
\eqref{eq-theta-2}, and \eqref{eq-tau-2}, the process $\{n(t\wedge\tau_{\infty}), t\ge 0\}$
is a counting process that counts the number of switches for the component $\La$. We can regard $n(\cdot) $ as a nonhomogeneous Poisson process with random intensity function $q_{\La(t)}(X(t))$, $t \in [0, \tau_{\infty})$.

Now for any $s < t \in   [0, \tau_{\infty})$ and  $A\in \B(\S)$, let
$$ p((s,t]\times A)=\sum_{u \in (s, t]}\one_{\{\La(u)\neq \La(u-),
\La(u)\in A\}} \text{ and }  p(t,A)=  p((0,t]\times A).$$
Then we have $ p(t\wedge\tau_{\infty}, \S)=n(t\wedge\tau_{\infty})$ and
\begin{equation}
\label{eq0-La-sde}\begin{aligned}
 \La(t\wedge\tau_{\infty}) & =  \La(0) + \sum_{k=1}^{\infty}[\La(\tau_{k})- \La(\tau_{k}-)] \one_{\{\tau_{k}\le t\wedge\tau_{\infty}\}} \\ & = \La(0) + \int_{0}^{t\wedge\tau_{\infty}} \int_{\S}[j- \La(s-)] \,  p (\d s, \d j).
\end{aligned}\end{equation} We can also define a Poisson random measure $N(\cdot,\cdot)$ on $[0, \infty) \times \R_{+}$ by  $$N(t\wedge\tau_{\infty}, B):= \sum_{j\in \S \cap B} p(t\wedge\tau_{\infty},j), \ \text{ for all } t \ge 0 \text{ and } B \in \mathcal B(\R_{+}).$$
Observe that for any $(x,i)\in \R^{d}\times \S$ and $j \in \S\setminus\{i\}$, we have \begin{displaymath}
 \m\{z\in [0,\infty): h(x,i,z) \neq 0 \} = q_{i}(x) \text{ and }\m\{z\in [0,\infty): h(x,i,z)=j-i\}=q_{ij}(x),
\end{displaymath} where $\m$ is the Lebesgue measure on $\R_{+}$. Therefore, we can rewrite \eqref{la-1}, \eqref{la-2} and \eqref{eq0-La-sde} into the following form
\begin{equation}\label{eq:La-SDE}
 \La(t\wedge\tau_{\infty})=\La(0) + \int_{0}^{t\wedge\tau_{\infty}}\int_{\R_{+}}h(X(s-),\La(s-),z){N}(\d s,\d z).
\end{equation}

\emph{Second}, we shall prove that the process $X$ is nonexplosive. Without loss of generality, we fix the initial value $(X(0),\La(0)) = (x,i)\in \R^{d}\times \S$  and for any integer $m \ge [|x|]+1$, denote by $\wdt{\tau}_m :=\inf \{t\ge 0: |X(t)|\ge m\}$ the first exit time for the $X$ component from the open ball $O_{m}:=\{x\in\R^{d}: |x|<m\}$, and let $\wdt\tau_{\infty}: =\lim_{m\to \infty}\wdt \tau_{m}$. We shall prove that
\beq{inftyone}\p\big(\wdt\tau_{\infty}=\infty\big)=1.\eeq To this end, we first consider the $X$ component process on the temporal interval $[0,\tau)$, where $\tau>0$ is the length of discrete time observation period.  Actually, by It\^o's formula, we have
\begin{equation}\label{2.9A}
  \begin{split}
    \d |X(t)|^2&=2\la X(t),a(X(t),\La(t))-b(X(\de(t)),\La(\de(t)))\raa\d t\\ &\quad +\norm{\sigma(X(t),\La(t))}_{\mathrm{HS}}^2\d t+2\la X(t), \sigma(X(t),\La(t))\d W(t)\raa\\
    &=\big(2\la X(t),a(X(t),\La(t))\raa+\norm{\sigma(X(t),\La(t))}_{\mathrm{HS}}^2\big)\d t\\
    &\quad -2\la X(t),b(X(\de(t)),\La(\de(t)))\raa\d t +2\la X(t), \sigma(X(t),\La(t))\d W(t)\raa.
  \end{split}
  \end{equation} Then, by condition (\textbf{H1}) we get that for $t\in [0,\tau)$, \beq{0tau}
  \begin{split}
    |X(t)|^2&=|X(0)|^2+\int_{0}^{t}\big(2\la X(s),a(X(s),\La(s))\raa+\norm{\sigma(X(s),\La(s))}_{\mathrm{HS}}^2\big)\d s\\
    &\quad -2\int_{0}^{t}\la X(s),b(X(0),\La(0))\raa\d s +M(t)\\
    &\le \big(2{\hat b}^{2}t+1\big)\big(|X(0)|^{2}+1\big)
    +\big(\bar C+1\big)\int_{0}^{t} |X(s)|^{2}\d s +M(t),
  \end{split}\eeq where $M(t)$ is a continuous local martingale and $\bar C=\max_{i\in\S} C(i)$. Taking expectations on both sides yields that
  \begin{equation*}
  \E|X(t)|^2\leq \big(2{\hat b}^{2}t+1\big)\big(\E|X(0)|^{2}+1\big)+\big(\bar C+1\big)\int_0^t \E|X(s)|^{2}\d s,
  \end{equation*} and then by Gronwall's inequality, it follows that
  \begin{equation}\label{2.11A}
  \E|X(t)|^2\leq \big(2{\hat b}^{2}t+1\big)\big(\E|X(0)|^{2}+1\big)\e^{(\bar C+1) t},\quad \text{for every $t\in [0,\tau)$}.
  \end{equation} Using Fatou's lemma, from \eqref{2.11A} we also have
 \beq{2.11A2}
  \E|X(\tau)|^2\leq \big(2{\hat b}^{2}\tau+1\big)\big(\E|X(0)|^{2}+1\big)\e^{(\bar C+1)\tau}.\eeq

  Deducing inductively,  we can obtain  that for any integer $m\ge 1$,
  \begin{equation}\label{2.11A3}
  \E|X(m\tau+t)|^2\leq \big(2{\hat b}^{2}t+1\big)\big(\E|X(m\tau)|^{2}+1\big)\e^{(\bar C+1) t},\quad \text{for every $t\in [0,\tau)$},
  \end{equation} which further implies that  $\E|X(t)|^2<\infty$ for any   $t\ge 0$.
  Therefore, the explosion time $\wdt{\tau}_{\infty}$ of the component $X$ must be infinity almost surely, and so \eqref{inftyone} holds.

\emph{Third}, we go to prove that for any given $m\ge [|x|]+1$, \beq{inftyzero}\p\big(\tau_{\infty}<\wdt{\tau}_{m}\big)=0, \quad \hbox{or equivalently} \quad \p\big(\tau_{\infty}\ge\wdt{\tau}_{m}\big)=1.\eeq Indeed, for any arbitrarily fixed $m_{0}\ge [|x|]+1$, let \[\wdh{H}:=K_{0}\big(1+m_{0}^{\kappa_{0}}\big).\]
Since $|X(s)|\le m_{0}$ for all $s\le \wdt{\tau}_{m_{0}}$, so by condition (\textbf{Q2'}) we have
$q_{ij}(X(s))\le \wdh{H}$ for all $i, j\in\S$ and all $s\le \wdt{\tau}_{m_{0}}$. Then, it follows from \eqref{eq:La-SDE} that for any $t\le \tau_{\infty}$,
\beq{eq:La-SDE2}\begin{split}
 \La(t\wedge\wdt{\tau}_{m_{0}})&=\La(0) + \int_{0}^{t\wedge\wdt{\tau}_{m_{0}}}\int_{\R_{+}}h(X(s),\La(s-),z){N}(\d s,\d z)\\
 &=\La(0) + \int_{0}^{t\wedge\wdt{\tau}_{m_{0}}}\int_{[0, M(M-1)\wdh H]}h(X(s-),\La(s-),z){N}(\d s,\d z),
\end{split}\eeq since the integrand $h(X(s),\La(s-),z)$ equals $0$ when $s\le \wdt{\tau}_{m_{0}}$ and $z \notin [0, M(M-1)\wdh H]$, where constant $M$ is the number of elements in $\S$. For the characteristic measure (i.e., the intensity measure) $\m(\cdot)$ of the Poisson random measure $N(\cdot,\cdot)$, since $\m\bigl([0, M(M-1)\wdh H]\big)<\infty$, so the
stationary point process corresponding to the above Poisson random measure $N(\cdot,\cdot)$ has only finite {\it occurrence times} on the temporal interval $[0,t\wedge\wdt{\tau}_{m_{0}})$ almost surely.
Hence, it follows from \eqref{eq:La-SDE2} that the component $\La$ has only finite jumps (i.e., switches) on the temporal interval $[0,t\wedge\wdt{\tau}_{m_{0}})$ almost surely; refer to \cite[Proposition 2.1 and Corollary 2.2]{XY2011} for the details. This implies that $\tau_{\infty}\ge t\wedge\wdt{\tau}_{m_{0}}$ almost surely, and then that $\tau_{\infty}\ge \wdt{\tau}_{m_{0}}$ almost surely due to that $t$ is arbitrary. Now we actually have proven \eqref{inftyzero} since the above $m_{0}\ge [|x|]+1$ is also arbitrary.

Let
$A_{m}:=\big\{\omega\in \Omega: \tau_{\infty} < \wdt{\tau}_{m}\big\}$ for $m\ge [|x|]+1$, and let $$A=\bigcup_{m=[|x|]+1}^{\infty}A_{m}, \quad A^{c}=\Omega\backslash A .$$
It follows from \eqref{inftyzero} that
\beq{A0Ac1}\p(A)=0, \quad \hbox{and then} \quad \p(A^{c})=1.\eeq Note that $A^{c}= \{ \tau_{\infty} \ge \wdt \tau_{\infty}\}$. Thus, by  \eqref{inftyone} we have \[\p(\tau_{\infty}=\infty)\geq \p(\wdt \tau_{\infty}=\infty)=1.\]
Consequently, the solution $(X(t),\La(t))$ is nonexplosive, and so the above interlacing procedure actually determines a process $(X(t),\La(t))$ for all $t \in[0,\infty)$. The proof is thus completed.

\end{document}